\numberwithin{equation}{section}
\titleformat{\section}{\Large\centering\scshape}{\thesection}{1em}{}
\titleformat{\subsection}{\large\bfseries}{\thesubsection}{1em}{}
\newtheoremstyle{alex}%
{20pt}%
{6pt}%
{\itshape}%
{}%
{\bfseries}%
{.}%
{ }%
{}%
\newtheoremstyle{nadja}%
{20pt}%
{12pt}%
{}%
{}%
{\bfseries}%
{.}%
{ }%
{}%
\theoremstyle{alex}
\newtheorem{thm}{Theorem}[section]
\newtheorem{lem}[thm]{Lemma}
\newtheorem{prop}[thm]{Proposition}
\newtheorem{cor}[thm]{Corollary}
\newtheorem{fac}[thm]{Fact}
\theoremstyle{nadja}
\newtheorem{defn}[thm]{Definition}
\newtheorem{rem}[thm]{Remark}
\theoremstyle{plain}
\newtheorem*{cla}{Claim}
\newcommand{\K}{\mathbb{K}}
\newcommand{\F}{\mathbb{F}}
\newcommand{\M}{\mathcal{M}}
\newcommand{\N}{\mathbb{N}}
\newcommand{\dcl}{\operatorname{dcl}}
\newcommand{\alg}{\operatorname{alg}}
\newcommand{\Gal}{\operatorname{Gal}}
\newcommand{\Th}{\operatorname{Th}}
\author{Nadja Hempel\thanks{Partially supported by ANR-09-BLAN-0047 Modig and ANR-13-BS01-0006-01 ValCoMo}}
\title{On $n$-dependent groups and fields}
\begin{document}

\maketitle
\begin{abstract}
First, an example of  a $2$-dependent group without a minimal subgroup of bounded index is given. Second,
all infinite $n$-dependent fields are shown to be Artin-Schreier closed.  Furthermore, the theory of any  non separably closed PAC field has the IP$_n$ property for all natural numbers $n$ and certain properties of dependent (NIP) valued fields extend to the $n$-dependent context.
\end{abstract}

\pagestyle{fancy}
\fancyhead{}
\fancyfoot{}
\fancyhead[LE]{\thepage}
\fancyhead[CE]{\textsc{\small{Nadja Hempel}}}
\fancyhead[RO]{\thepage}
\fancyhead[CO]{\textsc{\small{On $n$-dependent groups and fields}}}

\section{Introduction}\label{introductionsection}

Macintyre \cite{am_omega} and Cherlin-Shelah \cite{gc_ss} have shown independently that any superstable field is algebraically closed. However, less is known in the case of supersimple fields. Hrushovski proved that any infinite perfect bounded pseudo-algebraically closed (PAC) field is supersimple \cite{ehPAC} and conversely supersimple fields are perfect and bounded (Pillay and Poizat \cite{ap_bp}), and it is conjectured that they are PAC.
More is known about Artin-Schreier extensions of certain fields. Using a suitable chain condition for uniformly definable subgroups, Kaplan, Scanlon and Wagner showed in \cite{ik_ts_fow} that infinite NIP fields of positive characteristic are Artin-Schreier closed and simple fields have only finitely many Artin-Schreier extensions. The latter result was generalized to fields of positive characteristic defined in a theory without the tree property of the second kind (NTP$_2$ fields) by Chernikov, Kaplan and Simon \cite{ac_ik_ps}.

We study groups and fields without the $n$-independence property. Theories without the $n$-independence property, briefly $n$-dependent or NIP$_n$ theories, were introduced by Shelah in \cite{Shstrongdep}. They are a natural generalization of NIP theories, and in fact both notions coincide when $n$ equals to $1$. For background on NIP theories the reader may consult \cite{pslecnotes}. It is easy to see that any theory with the $(n+1)$-independence property has the $n$-independence property. On the other hand, as for any natural number $n$ the random $(n+1)$-hypergraph is ${n+1}$-dependent but has the $n$-independence property \cite[Example 2.2.2]{CPT}, the classes of $n$-dependent theories form a proper hierarchy. Additionally, since all random hypergraphs are simple, the previous example shows that there are theories which are simple and $n$-dependent but which are not NIP. Hence one might ask if there are any non combinatorial examples of n-dependent theories which have the independence property? And furthermore, which results of NIP theories can be generalized to $n$-dependent theories or more specifically which results of (super)stable theories remains true for (super)simple $n$-dependent theories? Beyarslan \cite{OeBe}  constructed the random $n$-hypergraph in any pseudo-finite field or, more generally, in any e-free perfect PAC field (PAC fields whose absolute Galois group is the profinite completion of the free group on $e$ generators). Thus, those fields lie outside of the hierarchy of $n$-dependent fields.

In this paper, we first give an example of a group with a simple 2-dependent theory which has the independence property. Additionally, in this group the $A$-connected component depends on the parameter set $A$. This establishes on the one hand a non combinatorial example of a proper 2-dependent theory and on the other hand shows that the existence of an absolute connected component in any NIP group cannot be generalized to 2-dependent groups. Secondly, we find a Baldwin-Saxl condition for $n$-dependent groups (Section \ref{sec_cc}). Using this and connectivity of a certain vector group established in Section \ref{sec_svg} we deduce that $n$-dependent fields are Artin-Schreier closed (Section \ref{sec_ASE}). Furthermore, we show in Section \ref{sec_PAC} that the theory of any non separably closed PAC field has in fact the IP$_n$ property for all natural numbers $n$ which was established by Duret for the case $n$ equals to $1$  \cite{jld}. In Section \ref{sec_app} we extend certain consequences found in \cite{ik_ts_fow} for dependent valued fields with perfect residue field as well as  in \cite{fjjk} by Jahnke and Koenigsmann for NIP henselian valued field to the $n$-dependent context.

I would like to thank my supervisors Thomas Blossier and Frank O. Wagner for useful comments during the work on this article and on first versions of this paper. Also, I like to thank Artem Chernikov for bringing this problem to my attention and to Daniel Palac\'in for valuable discussions around the topic.

\section{Preliminaries}
In this section we introduce $n$-dependent theories and state some general facts. The following definition can be found in \cite[Definition 2.4]{Sh2dep}.

\begin{defn}
Let $T$ be a theory. We say that a formula $\psi(\bar y_0, \dots ,\bar y_{n-1}; \bar x)$ in $T$  has the \emph{$n$-independence property} (IP$_n$) if there exists some parameters $(\bar a_i^j : i \in \omega, j \in n)$ and $(\bar b_I : I \subset \omega^n)$ in some model $\M$ of $T$ such that $\M \models \psi(\bar a_{i_0}^0, \dots , \bar a_{i_{n-1}}^{n-1}, \bar b_I)$ if and only if $(i_0, \dots, i_{n-1}) \in I$.

A theory is said to have IP$_n$ if one of its formulas has IP$_n$. Otherwise we called it \emph{$n$-dependent}. A structure is said to have IP$_n$ or to be $n$-dependent if its theory does.
\end{defn}

Both facts below are useful in order to proof that a theory is $n$-dependent as it reduces the complexity of formulas one has to consider to have IP$_n$.
The first one is stated as Remark 2.5  \cite{Sh2dep} and afterwards proved in detail as Theorem 6.4 \cite{CPT}.
\begin{fac}\label{fact_x1}
A theory $T$ is $n$-dependent if and only if every formula $\phi(\bar y_0 , . . . , \bar y_{n-1}; x )$ with $|x| = 1$ is $n$-dependent.
\end{fac}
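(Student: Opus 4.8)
The direction from left to right is immediate from the definition: if $T$ is $n$-dependent then no formula of $T$ has IP$_n$, in particular no formula $\phi(\bar y_0,\dots,\bar y_{n-1};x)$ with $|x|=1$ has it. For the converse I would prove the contrapositive. Suppose $T$ is not $n$-dependent, and choose a formula $\psi(\bar y_0,\dots,\bar y_{n-1};\bar x)$ with IP$_n$ for which the length $m$ of the parameter variable $\bar x$ is minimal among all formulas with IP$_n$; the claim is that $m=1$. Suppose not, so $m\ge 2$, and write $\bar x=(\bar x',x_m)$ with $|x_m|=1$. The point is that one cannot conclude by naive means: a single specialisation $\psi(\bar y_0,\dots,\bar y_{n-1};\bar c,x_m)$ (with $\bar c$ a parameter for $\bar x'$) need not have IP$_n$, because the IP$_n$-array of $\psi$ may essentially involve all coordinates of $\bar x$ simultaneously; and the tuples $\bar b_I$ are indexed by subsets $I\subseteq\omega^n$ rather than by $\omega$, so $\bar x'$ cannot be absorbed into one of the blocks $\bar y_j$ just by re-indexing.

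What I would do instead is a Sauer--Shelah/VC-density-style counting argument. Using compactness and a Ramsey/Erd\H{o}s--Rado extraction of mutually indiscernible sequences of $\bar y_j$-parameters, replace the definition of IP$_n$ by the finitary one: $\theta(\bar y_0,\dots,\bar y_{n-1};\bar z)$ has IP$_n$ if and only if for every $k$ there are sets $A_j$ of $\bar y_j$-tuples, $|A_j|=k$, such that every one of the $2^{k^n}$ subsets of $A_0\times\cdots\times A_{n-1}$ equals $\{\bar a\in A_0\times\cdots\times A_{n-1}:\M\models\theta(\bar a;\bar b)\}$ for some $\bar b$; dually, for an $n$-dependent formula the number of such traces on a $k$-grid grows much more slowly, which is the content of the $n$-dimensional Sauer--Shelah-type theory. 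One then wants to show that the number of traces realised by $\psi(\bar y_0,\dots,\bar y_{n-1};\bar x',x_m)$ on a $k$-grid is controlled --- via a subadditivity-type estimate for the relevant VC$_n$-density --- by those realised by the formulas obtained from $\psi$ on specialising some coordinates of $\bar x$, so with strictly shorter parameter variable. By minimality of $m$ each of these is $n$-dependent, hence has a slowly-growing trace function, and therefore so does $\psi$ itself --- contradicting the fact that $\psi$ shatters every $k$-grid.

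The main obstacle is exactly this last estimate. For $n=1$ it reduces to the classical facts that NIP is detected by formulas with a single object variable and that, by the Sauer--Shelah lemma, the (dual) VC density is finite and behaves subadditively when the variable is split. For $n\ge 2$ one needs the substantially more intricate Sauer--Shelah/VC-density theory for $n$-dependent formulas, together with a careful \emph{uniform} bound on the trace functions of the whole family of formulas $\psi(\bar y_0,\dots,\bar y_{n-1};\bar c,x_m)$ as $\bar c$ varies --- the $n$-dependence of each individual member is not enough by itself. A further, more routine, ingredient is the equivalence --- via compactness and Ramsey --- between Shelah's formulation of IP$_n$, with its subset-indexed parameters $\bar b_I$, and the finitary ``shatters every $k$-grid'' form used above.
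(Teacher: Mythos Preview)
The paper does not give a proof of this statement: it is recorded as a cited fact, with references to \cite[Remark~2.5]{Sh2dep} and, for a detailed argument, \cite[Theorem~6.4]{CPT}. So any comparison is with the proof in \cite{CPT}, not with anything in the present paper.

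Your overall strategy---pass to the finitary shattering formulation, invoke an $n$-dimensional Sauer--Shelah lemma, and bound the shatter function of $\psi(\bar y_0,\dots,\bar y_{n-1};\bar x)$ by induction on the length of $\bar x$---is indeed the line taken in \cite{CPT}. You are also right that the naive reductions (fixing $\bar x'$ to a constant, or trying to push $\bar x'$ into one of the $\bar y_j$-blocks by reindexing the $\bar b_I$) fail, and that the crux is a \emph{uniform} control on the family of traces.

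That said, what you have written is an outline, not a proof, and you say so yourself. The two ingredients you flag as ``obstacles''---the generalised Sauer--Shelah bound for an $n$-dependent formula, and the subadditivity/product estimate that lets one pass from $|\bar x|=m$ to $|\bar x|=m-1$---are exactly the results that \cite{CPT} establish in their Section~6, and neither is short. In particular, the uniformity you ask for is not obtained by bounding each specialisation $\psi(\bar y;\bar c,x_m)$ separately (which, as you correctly observe, is hopeless); the hypothesis is applied instead to a repartitioned formula with a single parameter variable, whose $n$-dependence yields one global bound from which the inductive estimate is extracted. Your minimal-counterexample framing is harmless but idle: the actual argument is a direct induction on $|\bar x|$, and the contradiction setup is never used.
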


\begin{fac}\cite[Corollary 3.15]{CPT} \label{fac_CloBooCom}
Let $\phi(\bar y_0 , . . . , \bar y_{n-1} ; \bar x)$ and $\psi(\bar y_0 , . . . , \bar y_{n-1 };\bar x) $ be $n$-dependent
formulas. Then so are $\neg \phi$, $\phi \wedge \psi$ and $\phi \vee \psi$.
\end{fac}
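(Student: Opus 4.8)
The plan is to treat the three closures separately, reducing everything to counting.

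The negation case is formal. Suppose $\neg\phi$ had IP$_n$, witnessed by an array $(\bar a_i^j : i\in\omega,\ j\in n)$ and a family $(\bar b_I : I\subseteq\omega^n)$ in some $\M$. Since $I\mapsto\omega^n\setminus I$ is a bijection of $\mathcal P(\omega^n)$, setting $\bar c_I:=\bar b_{\omega^n\setminus I}$ gives $\M\models\phi(\bar a_{i_0}^0,\dots,\bar a_{i_{n-1}}^{n-1},\bar c_I)$ iff $\M\not\models\neg\phi(\bar a_{i_0}^0,\dots,\bar a_{i_{n-1}}^{n-1},\bar b_{\omega^n\setminus I})$ iff $(i_0,\dots,i_{n-1})\in I$; hence $\phi$ has IP$_n$, a contradiction.

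For $\phi\wedge\psi$ (the case $\phi\vee\psi$ then follows by De Morgan together with the negation case, or by the argument below with intersections replaced by unions) I would argue by counting traces. Recall the compactness reformulation: a formula $\theta(\bar y_0,\dots,\bar y_{n-1};\bar x)$ is $n$-dependent iff there is $d\in\omega$ such that $\theta$ shatters no grid $(\bar a_i^j : i<d,\ j\in n)$, i.e.\ there is no $(\bar b_I : I\subseteq d^n)$ with $\models\theta(\bar a_{i_0}^0,\dots,\bar a_{i_{n-1}}^{n-1};\bar b_I)\iff(i_0,\dots,i_{n-1})\in I$; indeed, if arbitrarily large grids could be shattered, the partial type expressing full IP$_n$ (in variables $(\bar y_i^j)_{i\in\omega,\,j\in n}$ and $(\bar x_I)_{I\subseteq\omega^n}$) would be finitely satisfiable. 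For a grid $\bar a=(\bar a_i^j : i<m,\ j\in n)$ and a tuple $\bar b$, let $\operatorname{tr}_\theta(\bar b;\bar a):=\{(i_0,\dots,i_{n-1})\in m^n : \models\theta(\bar a_{i_0}^0,\dots,\bar a_{i_{n-1}}^{n-1};\bar b)\}$ be the $\theta$-trace of $\bar b$ on $\bar a$. Since $\phi$ and $\psi$ are $n$-dependent, there is $d\in\omega$ such that neither shatters any grid of side $d$; by the higher-dimensional Sauer--Shelah lemma for formulas of finite VC$_n$-dimension there is then a function $g(m)=2^{o(m^n)}$, depending only on $d$ and $n$, which bounds uniformly over all grids of side $m$ both the number of distinct $\phi$-traces and the number of distinct $\psi$-traces. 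As $\operatorname{tr}_{\phi\wedge\psi}(\bar b;\bar a)=\operatorname{tr}_\phi(\bar b;\bar a)\cap\operatorname{tr}_\psi(\bar b;\bar a)$, any side-$m$ grid carries at most $g(m)^2=2^{o(m^n)}$ distinct $(\phi\wedge\psi)$-traces, which for large $m$ is strictly smaller than $2^{m^n}$. Thus $\phi\wedge\psi$ shatters no side-$m$ grid for such an $m$, and is therefore $n$-dependent.

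The negation case and the reductions are routine, so the whole weight of the argument rests on a single quantitative input: the uniform sub-exponential bound $g(m)=2^{o(m^n)}$ on the number of traces of a formula of finite VC$_n$-dimension --- the higher Sauer--Shelah / VC$_n$-density estimate. (Mere finiteness of the VC$_n$-dimension, giving only $g(m)<2^{m^n}$, does not let the product argument close.) An alternative, more model-theoretic route --- presumably the one behind \cite[Corollary 3.15]{CPT} --- would dispense with the counting and instead extract a generalized indiscernible array indexed by a suitable Ramsey structure, then use its modeling property to uniformize, across all the $\bar b_I$ simultaneously, the choice of which of $\phi,\psi$ is responsible for the shattering; there the uniformization/modeling step would be the crux.
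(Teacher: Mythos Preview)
The paper does not prove this statement; it is recorded as a Fact from \cite[Corollary~3.15]{CPT} and used as a black box. Your argument is correct, and in fact the counting route via the higher Sauer--Shelah bound is precisely the one taken in \cite{CPT}: there the boolean closure is deduced from their VC$_n$-density estimate, which gives exactly the uniform $2^{o(m^n)}$ bound on the number of $\theta$-traces over a side-$m$ grid for $\theta$ of finite VC$_n$-dimension, and then the product argument for $\phi\wedge\psi$ closes as you describe. Your observation that mere finiteness of the VC$_n$-dimension (yielding only $g(m)<2^{m^n}$) would not suffice is also on point. The alternative indiscernibility-based route you sketch at the end is viable as well, but it is not the one used in \cite{CPT} for this particular corollary.
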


\begin{rem}\label{rem_IPn}
Note that a formula with at most $n$ free variables cannot witness the $n$-independence property. Thus, from the previous fact  it is easy to deduce that the random $n$-hypergraph is $n$-dependent. In fact, more generally any theory in which any formula of more than $n$ free variables is a boolean combination of formulas with at most $n$ free variables is $n$-dependent.
\end{rem}

\section{Example of a $2$-dependent group without a minimal subgroup of bounded index}

Let $G$ be $\F_p^{(\omega)}$ where $\F_p$ is the finite field with $p$ elements. We consider the structure $\M$ defined as $(G, \F_p, 0, +,\cdot)$ where $0$ is the neutral element, $+$ is addition in $G$, and $\cdot$ is the bilinear form $(a_i)_i \cdot (b_i)_i = \sum_i a_i b_i$ from $G$ to $\F_p$. This example in the case $p$ equals $2$ has been studied by Wagner in \cite[Example 4.1.14]{Wag}. He shows that it is simple and that the connected component $G^0_A$ for any parameter set $A$ is equal to $\{g \in G : \bigcap_{a \in A} g \cdot a = 0\}$. Hence, it is getting smaller and smaller while enlarging $A$ and whence the absolute connected component, which exists in any NIP group, does not for this example.

\begin{lem}\label{lem_MEQ}
The theory of $\M$ eliminates quantifiers.
\end{lem}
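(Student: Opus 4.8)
The structure $\M = (G, \F_p, 0, +, \cdot)$ is a two-sorted structure: a vector-space sort $G = \F_p^{(\omega)}$ with addition, and the field sort $\F_p$, linked by the bilinear form. Since $\F_p$ is finite, every element of that sort is a parameter (definable over $\emptyset$), so the content of quantifier elimination is really about the $G$-sort together with the values of $\cdot$. My plan is to use the standard back-and-forth criterion: it suffices to show that for any two $\aleph_0$-saturated models $\M_1, \M_2$ of $\Th(\M)$, any partial isomorphism between finitely generated substructures, and any single new element $g$ in $\M_1$, the partial isomorphism extends to one whose domain contains $g$. A finitely generated substructure is determined by a finite tuple $\bar g = (g_1, \dots, g_k)$ from the $G$-sort together with all the scalars $g_i \cdot g_j \in \F_p$ and $g_i \cdot c$ for the (finitely many relevant, in fact all) $c$ in the standard copy of $\F_p^{(\omega)}$ — but note the second sort as abstractly axiomatized is just $(\F_p, +, \cdot)$, so what matters is the $\F_p$-bilinear-form data on the finite-dimensional subspace $V = \langle g_1, \dots, g_k \rangle \le G$.

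**Key steps.** First I would write down explicitly the theory: $G$ is an infinite-dimensional $\F_p$-vector space, $\cdot$ is a symmetric bilinear form, and crucially the form is \emph{non-degenerate} and moreover "uniformly surjective" in the sense that for any finite-dimensional subspace $V$ and any linear functional $\lambda : V \to \F_p$, there exists $g \in G$ with $g \cdot v = \lambda(v)$ for all $v \in V$ — and one may additionally prescribe $g \cdot g$ arbitrarily, and prescribe $g$ to be linearly independent from $V$ (or inside $V$, matching the required functional). These are all first-order expressible by a scheme of axioms (for each $\dim V = m$ and each target functional and each target self-pairing). The main lemma is then: in any $\aleph_0$-saturated model, these extension properties hold. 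Second, given the partial isomorphism $f$ between substructures generated by finite subspaces $V_1 \le \M_1$ and $V_2 = f(V_1) \le \M_2$, and a new element $g \in \M_1$: if $g \in V_1$ nothing to do; otherwise let $\lambda : V_1 \to \F_p$ be $v \mapsto g \cdot v$, transport it via $f$ to $\lambda' : V_2 \to \F_p$, and set $s = g \cdot g \in \F_p$. By the extension property in $\M_2$ (applied in its saturated elementary extension, or directly by saturation) find $g' \in \M_2$ linearly independent from $V_2$ with $g' \cdot v' = \lambda'(f^{-1}v')$ for $v' \in V_2$ and $g' \cdot g' = s$; then $f \cup \{(g, g')\}$ extends to an isomorphism of the enlarged substructures. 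Third, quantifier elimination over the \emph{empty} set follows because the standard model embeds into every model and the relevant $\emptyset$-generated substructure (the copy of $\F_p^{(\omega)}$ together with $0 \in G$) is handled the same way; alternatively invoke the usual reduction that QE reduces to isomorphism of prime substructures plus the back-and-forth above.

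**Main obstacle.** The delicate point is verifying the "uniform surjectivity / prescribed self-pairing" axioms genuinely hold in $\Th(\M)$ and are first-order, and then that they suffice to run the extension step even when $g$ is linearly \emph{dependent} on $V_1$ — in that case $\lambda$ is already forced to be $v \mapsto g \cdot v$ for the unique expression of $g$ in terms of $V_1$, and one must check the transported data is consistent, which it is precisely because $f$ preserves all the pairings $g_i \cdot g_j$. A second subtlety is characteristic $2$: there the form $g \cdot g = \sum a_i^2 = \sum a_i = (\sum a_i e_i) \cdot e$ where $e = \sum e_i$ is not in $\F_p^{(\omega)}$, so one should be careful that "$g \cdot g$ arbitrary" is the right freedom and is independent of the linear-functional data; checking this compatibility (that the quadratic refinement of the bilinear form imposes no extra constraint beyond what a fresh vector can satisfy) is where I expect to spend the most care. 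Once these extension axioms are pinned down, the back-and-forth is routine and yields elimination of quantifiers.
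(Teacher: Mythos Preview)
Your back-and-forth approach is correct and will go through: the theory really is that of an infinite-dimensional $\F_p$-vector space equipped with a ``regular'' symmetric bilinear form, and the extension property you isolate (realize any linear functional on a finite-dimensional subspace by an element outside that subspace, with prescribed self-pairing) does hold in every model and is first-order expressible as a scheme.  Your worry about characteristic~$2$ is not a real obstacle: even though $g\cdot g=\sum a_i$ is linear there, it is not the pairing against any element of $G$, so prescribing $g\cdot g$ is genuinely an extra degree of freedom and can be adjusted by toggling one fresh coordinate.

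The paper, by contrast, does a direct syntactic elimination: it normalizes every quantifier-free formula $\varphi(x,\bar y)$ into a disjunction of conjunctions of shape
\[
\psi(\bar y)\ \wedge\ x\cdot x=\epsilon\ \wedge\ \bigwedge x=t_i^0(\bar y)\ \wedge\ \bigwedge x\neq t_i^1(\bar y)\ \wedge\ \bigwedge x\cdot t_i^2(\bar y)=\epsilon_i,
\]
and then eliminates $\exists x$ by hand---trivially if some equality $x=t_i^0(\bar y)$ is present, and otherwise by writing down an explicit linear-algebra condition on the $t_i^2(\bar y)$ and the $\epsilon_i$ saying the system is consistent.  This buys an explicit description of the quantifier-free equivalent (which the paper then reuses in the proof that $\M$ is $2$-dependent), at the cost of a somewhat fiddly case analysis.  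Your semantic route is cleaner and more conceptual, but does not hand you the normal form directly; if you go this way, you would need to redo a small amount of that normalization separately when you come to the $2$-dependence argument.
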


\begin{proof}
Let $ t_1(x;\bar y )$ and $ t_2(x;\bar y )$ be two group terms in $G$ and let $ \epsilon$ be an element of $\F_p$.
Observe that the atomic formula $t_1(x;\bar y) = t_2(x;\bar y)$ (resp.  $t_1(x;\bar y) \neq  t_2(x;\bar y)$)  is equivalent to an atomic formula of the form $x = t(\bar y)$ or $ 0 = t(\bar y)$ (resp.  $x \neq  t(\bar y)$ or $ 0 \neq t(\bar y)$) for some group term $t(\bar y)$. Note that $ 0 = t(\bar y)$ as well as $ 0 \neq t(\bar y)$ are both quantifier free formulas in the free variables $\bar y $. Furthermore, the atomic formulas   $t_1(x;\bar y) \cdot t_2(x;\bar y) = \epsilon$ and $ t_1(x;\bar y )\cdot t_2(x;\bar y )\not= \epsilon$ are equivalent to a boolean combination of atomic formulas of the form $x\cdot x = \epsilon_x$, $x\cdot t_i(\bar y) = \epsilon_i$ and $t_j(\bar y)\cdot t_k(\bar y) = \epsilon_{jk}$ (a quantifier free formula in the free variables $\bar y$)  with $t_i(\bar y )$ group terms and $\epsilon_x$, $\epsilon_i$, and  $\epsilon_{jk}$ elements of $\F_p$.
Thus, a quantifier free formula $\varphi(x, \bar y)$ is equivalent to a finite disjunction of formulas of the form
\[\phi(x; \bar y)= \psi(\bar y) \wedge x \cdot x = \epsilon \wedge \bigwedge_{i \in I_0} x = t_i^0(\bar y ) \wedge  \bigwedge_{i \in I_1} x \neq t_i^1(\bar y) \wedge \bigwedge_{i \in I_2} x \cdot t_i^2(\bar y )= \epsilon_i \]
where $ t_i^j(\bar y )$ are group terms, $\epsilon, \epsilon_i$ are elements of $\F_p$, and $\psi(\bar y)$ is a quantifier free formula in the free variables $\bar y$. If $I_0$ is nonempty, the formula $\exists x \phi(x, \bar y)$ is equivalent to
\[\psi(\bar y) \wedge  \bigwedge_{j, l \in I_0} t_j^0(\bar y ) = t_l^0(\bar y ) \wedge t_i^0(\bar y )  \cdot  t_i^0(\bar y )  = \epsilon \wedge \bigwedge_{j \in I_1}  t_i^0(\bar y )  \neq t_j^1(\bar y) \wedge \bigwedge_{j \in I_2}  t_i^0(\bar y )  \cdot t_j^2(\bar y )= \epsilon_j \]
for any $i \in I_0$. Now, we assume that $I_0$ is the empty set. If there exists an element $x'$ such that $x' \cdot z_i = \epsilon_i$ for given $z_0, \dots, z_m$ in $G$ and $\epsilon_i \in \F_p$, one can always find an element $x$ such that $ x\cdot x= \epsilon$ and $x \neq v_j$ for given $v_0, \dots, v_q$ in $G$ which still satisfies  $x \cdot z_i = \epsilon_i$  by modifying $x'$ at a large enough coordinate. Hence, it is enough to find a quantifier free condition which is equivalent to $\exists x \bigwedge_{i \in I_2} x \cdot t_i^2(\bar y )= \epsilon_i $. For $i \in \F_p$, let
\[Y_i=\{j\in I_2:\epsilon_j=i\}.\]
Then $\exists x \bigwedge_{i \in I_2} x \cdot t_i^2(\bar y )= \epsilon_i $ is equivalent to
\[\bigwedge_{i=0}^{p-1} \bigwedge_{j \in Y_i} t_j^2(\bar y) \notin \left\{ \sum_{k \in Y_0} \lambda_k^0 t_k^2(\bar y ) + \dots + \sum_{k \in Y_{i}\setminus j} \lambda_k^{i} t_k^2(\bar y ): \lambda_k^l \in \F_p, \sum_{l=1}^{i} \sum_{k\in Y_l} ^{k \neq j}l\cdot_{\F_p} \lambda_k^l  \neq i \right\}\]
which finishes the proof.
\end{proof}

\begin{lem}\label{FactEss}
The structure $\M$ is 2-dependent.
\end{lem}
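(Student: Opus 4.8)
The plan is to use Fact \ref{fact_x1} to reduce to showing that every formula $\phi(\bar y_0,\bar y_1;x)$ with $|x|=1$ is $2$-dependent, and then to use quantifier elimination (Lemma \ref{lem_MEQ}) together with Fact \ref{fac_CloBooCom} to reduce further to the atomic building blocks in the single variable $x$. By the analysis in the proof of Lemma \ref{lem_MEQ}, an atomic (or negated atomic) formula in $x$ and parameters is, up to boolean combination, one of: a formula $\psi(\bar y)$ not involving $x$ at all (trivially $2$-dependent, indeed $1$-dependent, since it is a pullback along a coordinate projection); the formula $x\cdot x = \epsilon$ (no parameters, so no IP$_n$); an equality $x = t(\bar y)$ or inequality $x \neq t(\bar y)$ for a group term $t$; or a "linear" condition $x\cdot t(\bar y) = \epsilon$. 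Since a group term in the two parameter tuples $\bar y_0,\bar y_1$ is an $\F_p$-linear combination of coordinates of $\bar y_0$ and $\bar y_1$, say $t(\bar y_0,\bar y_1) = t_0(\bar y_0) + t_1(\bar y_1)$, I will handle these remaining two families separately.

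For the equality-type formulas: $x = t_0(\bar y_0) + t_1(\bar y_1)$. I claim this is already $2$-dependent — in fact, I expect it fails to have $\text{IP}_2$ for a soft reason. If we had witnesses $(a_i^0 : i\in\omega)$, $(a_i^1:i\in\omega)$ and $(b_I : I\subseteq \omega^2)$ with $b_I = a_{i_0}^0 + a_{i_1}^1$ exactly when $(i_0,i_1)\in I$, then for each fixed $b_I$ the set $\{(i_0,i_1) : a_{i_0}^0 + a_{i_1}^1 = b_I\}$ must equal $I$; but this set, as a subset of $\omega\times\omega$, is the solution set of a single additive equation $a_{i_0}^0 = b_I - a_{i_1}^1$, hence is a partial matching (graph of a partial injection), so it cannot realize an arbitrary $I\subseteq\omega^2$ (e.g. a "square" $\{0,1\}\times\{0,1\}$). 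Inequalities are handled by negation via Fact \ref{fac_CloBooCom}.

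The main case, and the main obstacle, is the bilinear condition $x\cdot\big(t_0(\bar y_0)+t_1(\bar y_1)\big) = \epsilon$, i.e. $x\cdot t_0(\bar y_0) + x\cdot t_1(\bar y_1) = \epsilon$. Here I would argue directly that this formula is $2$-dependent. The key observation is that, for a fixed value of $x$, the map $\bar y_0 \mapsto x\cdot t_0(\bar y_0)$ is a single $\F_p$-linear functional, so this formula is controlled by an "additive/bilinear" structure which is genuinely simple and $2$-dependent but not $1$-dependent — matching the statement that $\M$ has IP. Concretely, suppose for contradiction that there are witnesses $a_i^0, a_i^1, b_I$ for IP$_2$. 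Writing $u_i = x\mapsto $ unknown, one considers the data as living in a tensor/bilinear setup: after extracting an indiscernible array, the truth value of $b_I\cdot t_0(a_{i_0}^0) + b_I\cdot t_1(a_{i_1}^1)=\epsilon$ depends on $(i_0,i_1,I)$. I would fix $I$ and show the set $\{(i_0,i_1): b_I \cdot t_0(a_{i_0}^0) + b_I\cdot t_1(a_{i_1}^1) = \epsilon\}$ is definable in the pure $\F_p$-vector space $(\F_p, +)$ from the sequences $\big(b_I\cdot t_0(a_{i_0}^0)\big)_{i_0}$ and $\big(b_I\cdot t_1(a_{i_1}^1)\big)_{i_1}$ of scalars — a sum of two one-variable functions equalling a constant — and such sets form a very restricted family (they are "combinatorial rectangles modulo a linear relation"), which cannot encode arbitrary $I\subseteq\omega^2$. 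Thus no formula in one variable $x$ has IP$_2$, and by Fact \ref{fact_x1} the structure $\M$ is $2$-dependent. I expect the delicate point to be making the last reduction precise — transferring the IP$_2$-shatter data into the pure additive reduct of $\F_p$ so that the counting/linear-algebra obstruction applies uniformly across the (finitely many, by QE) atomic components appearing in $\phi$.
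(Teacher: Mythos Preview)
Your plan is correct and matches the paper's approach: reduce via Fact~\ref{fact_x1}, quantifier elimination, and Fact~\ref{fac_CloBooCom} to the bilinear atom $x\cdot(t_0(\bar y_0)+t_1(\bar y_1))=\epsilon$, then observe that for fixed $c$ the realization set in $\omega^2$ has the form $\{(i,j):f(i)+g(j)=\epsilon\}$ with $f,g:\omega\to\F_p$, hence is closed under completing parallelograms. The paper makes your ``restricted family'' step precise exactly this way, via the identity $e_{ij}(c)=e_{ik}(c)+(p-1)\,e_{lk}(c)+e_{lj}(c)$ and the explicit obstruction $I=\{(1,1),(1,2),(2,2)\}$; your proposed indiscernible-array extraction is unnecessary.
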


\begin{proof}
We suppose, towards a contradiction, that $\M$ has IP$_2$.  By Fact \ref{fact_x1} we can find a formula  $\phi(\bar y_0, \bar y_1; x)$ with $| x|=1$ which witnesses the 2-independence property. By the proof of Lemma \ref{lem_MEQ} and as being $2$-dependent is preserved under boolean combinations (Fact \ref{fac_CloBooCom}),  it suffices to prove that none of the following formulas can witness the $2$-independence property in the variables $(\bar y_0, \bar y_1;x)$:
\begin{itemize}
\item quantifier free formulas of the form $ \psi(\bar y_0, \bar y_1)$,
\item the formula $ x\cdot x = \epsilon $ with $\epsilon$ in $\F_p$,
\item formulas of the form $ x = t( \bar y_0, \bar y_1 )$ for some group term $ t( \bar y_0, \bar y_1 )$,
\item formulas of the form $  x \cdot t( \bar y_0, \bar y_1 ) = \epsilon$ for some group term $ t( \bar y_0, \bar y_1 )$ and $\epsilon$ in $\F_p$.
\end{itemize}
As the atomic formula  $ \psi(\bar y_0, \bar y_1)$ does not depend on $x$ and $ x\cdot x = \epsilon $  does not depend on $\bar y_0$ nor $\bar y_1$ they cannot witness the $2$-independence property in the variables  $(\bar y_0, \bar y_1;x)$. Furthermore, as for given $\bar a$ and $\bar b$, the formula  $ x = t( \bar a, \bar b )$ can be only satisfied by a single element, such a formula is as well $2$-dependent. Thus the only candidate left is a formula of the form $ x \cdot t(\bar y_0, \bar y_1 ) = \epsilon$ with $ t(\bar y_0, \bar y_1 )$ some group term in $G$ and $\epsilon$ an element of $\F_p$. Thus, we suppose that the formula  $ x \cdot t(\bar y_0, \bar y_1 ) = \epsilon$ has IP$_2$ and choose some elements $\{ \bar a_i: i \in \omega\}$, $\{ \bar b_i: i \in \omega\}$ and $\{c_I: I \subset \omega^2\}$ which witnesses it.  As $t(\bar y_0, \bar y_1 )$ is just a sum of elements of the tuple $\bar y_0$ and $\bar y_1$ and $G$ is commutative, we may write this formula as $ x \cdot (t_a(\bar y_0) +t_b( \bar y_1 )) = \epsilon$
in which the term $t_a(\bar y_0)$ (resp. $t_b(\bar y_1)$) is a sum of elements of the tuple $\bar y_0$ (resp. $\bar y_1$). Let
\[ S_{ij}:= \{x :  x \cdot (t_a(\bar a_i )+ t_b(\bar b_j )) = \epsilon\}\]
be the set of realizations of the formula $x \cdot (t_a(\bar a_i )+ t_b(\bar b_j )) = \epsilon$.
Note, that an element $c$ belongs to $ S_{ij}$  if and only if we have that $e_{ij} (c)$ defined as
\[e_{ij} (c) =  c  \cdot \left(t_a (\bar a_i) + t_b( \bar b_j)\right)\]
is equal to $ \epsilon$.
 Let $i$, $ l$, $ j$, and $k$ be arbitrary natural numbers. Then,
\begin{align*}
e_{ij} (c)& =  c  \cdot \left(t_a (\bar a_i) + t_b( \bar b_j)\right) \\
& =  c  \cdot \left((t_a (\bar a_i) + t_b( \bar b_k)) + (p-1)( t_a (\bar a_l) + t_b( \bar b_k))+ (t_a (\bar a_l) + t_b( \bar b_j))\right) \\
& =   e_{ik} (c)+(p-1) e_{lk}(c) +e_{lj} (c).
\end{align*}
If the element $c$ belongs to $ S_{ik} \cap  S_{lk} \cap  S_{lj}$, the terms $e_{ik}(c)$, $e_{lk}(c)$, and $e_{lj} (c)$ are all equal to $\epsilon$. By the equality above we get that $e_{ij} (c)$ is also equal to $\epsilon$ and so $c$ also belongs to $S_{ij}$.

Let $I = \{ (1,1),(1,2),(2,2)\}$. Then $c_I \in  S_{22} \cap  S_{12} \cap  S_{11}$ but $c_I \not\in S_{21}$ which contradicts the precious paragraph letting $i$ and $k$ be equal to $2$ and $l$ and $j$ be equal to $1$. Thus the formula $ x \cdot t(\bar y_0, \bar y_1 ) = \epsilon$ is $2$-dependent, hence all formulas in the theory of $\M$ are $2$-dependent and whence $\M$ is $2$-dependent.
\end{proof}

\section{Baldwin-Saxl condition for $n$-dependent theories}\label{sec_cc}

We shall now prove a suitable version of the Baldwin-Saxl condition \cite{BS} for $n$-dependent formulas.

\begin{prop}\label{prop_cc} Let $G$ be a group and let $\psi(\bar y_0,\ldots,\bar y_{n-1};x)$ be a $n$-dependent formula for which the set $\psi(\bar b_0,\ldots,\bar b_{n-1};G )$ defines a subgroup of $G$ for any parameters $\bar b_0, \dots,\bar  b_{n-1}$. Then there exists a natural number $m_\psi$ such that for any $d$ greater or equal to $m_\psi$ and any array of parameters $(\bar a_{i,j}: i <n,\ j \leq d)$ there is $\nu \in d^n$ such that
\[\bigcap_{\eta\in d^n} H_\eta=\bigcap_{\eta\in d^n, \eta\neq\nu} H_\eta\]
where $H_\eta$ is defined as $\psi(\bar a_{0, i_0},\ldots,\bar a_{n-1, i_{n-1}}; x)$ for $\eta = (i_0, \dots, i_{n-1})$.
\end{prop}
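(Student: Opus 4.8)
The strategy is to mimic the classical Baldwin–Saxl argument, but where the classical case forbids a long chain of strictly decreasing intersections of uniformly definable subgroups, here we must forbid a long "antichain" of subgroups indexed by an $n$-dimensional grid, using $n$-dependence instead of ordinary dependence. So I would argue by contraposition: suppose no such $m_\psi$ exists. Then for every $d$ there is an array $(\bar a_{i,j} : i < n,\ j \le d)$ such that for \emph{every} $\nu \in d^n$ we have
\[
\bigcap_{\eta \in d^n} H_\eta \;\subsetneq\; \bigcap_{\eta \in d^n,\ \eta \ne \nu} H_\eta .
\]
For each such $\nu$, fix a witness $g_\nu$ lying in all $H_\eta$ with $\eta \ne \nu$ but not in $H_\nu$. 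The point of these witnesses is exactly that $g_\nu \in H_\eta \iff \eta \ne \nu$, i.e. the element $g_\nu$ "selects" the single grid-point $\nu$.

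**Encoding an arbitrary subset of the grid.**
The key combinatorial step is to upgrade from "selecting a single point" to "selecting an arbitrary subset," which is what IP$_n$ needs. Given any subset $J \subseteq d^n$, consider the complement $J^c = d^n \setminus J$ and the product element $g_J := \prod_{\nu \in J^c} g_\nu$ (the group need not be abelian, but $\psi(\bar b; G)$ is a subgroup, so this product lies in $H_\eta$ as soon as every factor does, and fails to lie in $H_\eta$ precisely when... ) — here one must be slightly careful: membership of a product in a subgroup is not controlled coordinatewise in general. The clean way around this is to note that each $H_\eta$ is a subgroup, so $g_J \in H_\eta$ as long as all factors $g_\nu$ ($\nu \in J^c$) lie in $H_\eta$; and if $\eta \in J^c$ then exactly one factor, namely $g_\eta$, is outside $H_\eta$ while all the others are inside, forcing $g_J \notin H_\eta$. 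Hence $g_J \in H_\eta \iff \eta \notin J^c \iff \eta \in J$. (If one prefers to avoid even this mild subgroup bookkeeping, one instead records the witnesses directly and cites that $\psi$ defines subgroups to get $g_J \in \psi(\bar a_{0,i_0},\dots,\bar a_{n-1,i_{n-1}};G)$ iff $(i_0,\dots,i_{n-1}) \in J$.)

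**Extracting IP$_n$ and the contradiction.**
Now set, for $i < n$, the parameter sequences $\bar c^{\,i}_j := \bar a_{i,j}$ for $j \le d$, and for each $J \subseteq d^n$ the parameter $\bar b_J := g_J$. Then by construction
\[
\models \psi(\bar c^{\,0}_{i_0}, \dots, \bar c^{\,n-1}_{i_{n-1}}; \bar b_J) \iff (i_0, \dots, i_{n-1}) \in J .
\]
Since $d$ was arbitrary and all $J \subseteq d^n$ occur, compactness yields such a configuration indexed by $\omega$ in place of $d$ and by all $J \subseteq \omega^n$, which is exactly a witness that $\psi$ has IP$_n$ — contradicting the hypothesis that $\psi$ is $n$-dependent. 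This proves the proposition with $m_\psi$ equal to (any bound extracted from) the failure of IP$_n$ for $\psi$.

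**Main obstacle.**
The genuinely delicate point is the middle step: making sure the "select an arbitrary subset of the grid" encoding is legitimate in a possibly non-abelian group. One has to phrase the witnesses $g_\nu$ and their products carefully so that membership of $g_J$ in each $H_\eta$ is governed solely by which $g_\nu$'s it is built from — this is where the subgroup hypothesis on $\psi(\bar b;G)$ is used, and it must be invoked at exactly the right place (a product of elements of a subgroup stays in the subgroup; and conversely if all but one factor lie in $H_\eta$ then the product does not). Everything else — the contraposition, the reindexing into the IP$_n$ template, and the compactness step — is routine.
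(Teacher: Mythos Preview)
Your argument is correct and follows essentially the same route as the paper: pick witnesses $g_\nu\in\bigcap_{\eta\ne\nu}H_\eta\setminus H_\nu$, multiply them to encode arbitrary subsets of the grid, and apply compactness to contradict $n$-dependence. The only cosmetic difference is that you index the product by the complement $J^c$ so that $g_J\in H_\eta\iff\eta\in J$ and obtain IP$_n$ for $\psi$ directly, whereas the paper takes $c_J=\prod_{\eta\in J}c_\eta$, gets $c_J\in H_\nu\iff\nu\notin J$, and concludes IP$_n$ for $\neg\psi$ before invoking closure of IP$_n$ under negation; this is a trivial reindexing and not a substantive divergence.
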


\begin{proof} Suppose, towards a contradiction, that for an arbitrarily large natural number $m$ one can find a finite array $(\bar a_{i,j}: i <n,\ j \leq m)$ of parameters such that $\bigcap_{\eta\in m^n} H_\eta$ is strictly contained in any of its proper subintersections. Hence, for every $\nu\in m^n$ there exists  $c_\nu$ in $  \bigcap_{\eta\neq\nu} H_\eta\setminus \bigcap_{\eta} H_\eta$.

Now, for any subset $J$ of $m^n$, we let $c_J:=\prod_{\eta\in J} c_\eta$. Note that $c_J\in H_\nu$ whenever $\nu \in m^n\setminus J$. On the other hand, if $\nu$ is an element of $ J$, all factors of the product except of $c_\nu$ belong to $H_\nu$, whence $c_J\not\in H_\nu$. By compactness, one can find an infinite array of parameters $(\bar a_{i,j}: i <n,\ j \leq \omega)$ and elements $\{c_J : J \subset \omega^n\}$ such that $c_J$ belongs to $H_\nu$ if and only if $\nu \not \in J$. Hence, the formula $\neg\psi(\bar y_0,\ldots,\bar y_{n-1}; x)$ has IP$_n$ and whence by Fact \ref{fac_CloBooCom} the original formula $\psi(\bar y_0,\ldots,\bar y_{n-1};x)$ has IP$_n$ as well contradicting the assumption.
\end{proof}

\section{A special vector group}\label{sec_svg}
For this section, we fix an algebraically closed field $\K$ of characteristic $p > 0$ and we let $\wp(x) $ be the additive homomorphism $x \mapsto x^p -x$ on $\K$.

We analyze the following algebraic subgroups of  $(\K, +)^{n}$:
\begin{defn}\label{def_Ga}
For a singleton $a$ in $\K$, we let $G_{a}$ be equal to $(\K, +)$, and for a tuple $\bar{a}=(a_0, \dots , a_{n-1}) \in \K^n$ with $n>1$ we define:
\[G_{\bar{a}} = \{ (x_0, \dots , x_{n-1}) \in \K^{n} :\ a_0 \cdot \wp(x_0)= a_i \cdot \wp(x_i)\mbox{ for }0 \leq i < n \}.\]
\end{defn}
Recall that for an algebraic group $G$, we denote by $G^0$ the connected component of the unit element of $G$. Note that if $G$ is definable over some parameter set $A$, its connected component $G^0$ coincides with the smallest $A$-definable subgroup of $G$ of finite index.

Our aim is to show that $G_{\bar{a}}$ is connected for certain choices of $\bar{a}$, namely  $G_{\bar{a}}$ coincides with $G^0_{\bar{a}}$.
\begin{lem}\label{lem_addpolyG}
Let $k$ be an algebraically closed subfield of $\K$, let $G$ be a $k$-definable connected algebraic subgroup of $(\K^n, +)$ and let $f$ be a $k$-definable homomorphism from $G$ to $(\K, +)$ such that for every $\bar g \in G$ there are polynomials $P_{\bar g}(X_0, \dots , X_{n-1})$ and $Q_{\bar g}(X_0, \dots , X_{n-1})$ in $k [X_0, \dots , X_{n-1}]$ such that
\[f(\bar g)=\frac{P_{\bar g}(\bar g)}{Q_{\bar g}(\bar g)}.\]Then $f$ is an additive polynomial in $k[X_0, \dots , X_{n-1}]$. In fact, there exists natural numbers $m_0, \dots, m_n$ such that $f$ is of the form $\sum_{i=0}^{m_0} a_{i,0} X_0^{p^i} + \dots + \sum_{i=0}^{m_n} a_{i,n} X_n^{p^i}$ with coefficients $a_{i,j}$ in $k$.
\end{lem}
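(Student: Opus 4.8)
The plan is to exploit two facts: first, that a rational function on a connected algebraic group which is a homomorphism into $(\K,+)$ must in fact be a regular function (a polynomial), and second, that an additive polynomial map on $(\K^n,+)$ in characteristic $p$ is necessarily a $p$-polynomial, i.e.\ an $\F_p$-linear combination of the Frobenius powers $X_j^{p^i}$. For the first point I would argue as follows. Since $G$ is a connected algebraic subgroup of $(\K^n,+)$, it is a vector group, hence isomorphic over $k$ to some $(\K^r,+)$; after such a coordinate change it suffices to treat $G=(\K^r,+)$ itself. The hypothesis says that $f$ agrees on all of $G$ with a ratio $P/Q$ of polynomials, so $f$ is a rational function regular on a dense open subset $U$ of $\K^r$; being a homomorphism, its set of poles is translation-stable under the subgroup generated by $U$, which is all of $\K^r$, so $f$ has no poles and is a genuine polynomial in $k[X_0,\dots,X_{n-1}]$. (One must be a little careful that $f$ is defined over $k$: since $G$ and $f$ are $k$-definable and $k$ is algebraically closed, hence a model, the polynomial representing $f$ can be taken with coefficients in $k$, e.g.\ by Galois-theoretic descent or by noting that a $k$-definable function on a $k$-variety into the affine line is given by an element of the coordinate ring $k[G]$.)

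With $f\in k[X_0,\dots,X_{n-1}]$ an additive polynomial in hand, the remaining task is purely algebraic: show that an additive polynomial over a field of characteristic $p$ is a $p$-polynomial. Write $f=\sum_{i=0}^{m} a_{i,0}X_0^{i}+\dots$ — more precisely decompose $f=f_0(X_0)+\dots+f_{n-1}(X_{n-1})+(\text{cross terms})$. Additivity $f(\bar X+\bar Y)=f(\bar X)+f(\bar Y)$ forces all the mixed monomials to vanish (set all but two variables to zero and compare), so $f=\sum_j g_j(X_j)$ with each $g_j$ a one-variable additive polynomial with no constant term. For a single-variable polynomial $g$ with $g(X+Y)=g(X)+g(Y)$, expanding and comparing coefficients shows that the coefficient of $X^d$ can be nonzero only when $\binom{d}{e}\equiv 0\pmod p$ for all $0<e<d$, which by Lucas's theorem holds precisely when $d$ is a power of $p$. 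Hence each $g_j=\sum_{i=0}^{m_j} a_{i,j}X_j^{p^i}$, giving exactly the asserted form.

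The main obstacle, I expect, is the first step — upgrading "locally rational homomorphism" to "globally polynomial" — and in particular the bookkeeping around the field of definition and around reducing a general connected $G$ to a standard vector group. The translation-invariance argument for the pole locus is clean once set up, but verifying that the structure-theoretic reduction $G\cong(\K^r,+)$ can be done over the algebraically closed field $k$ (so that the resulting polynomial really lies in $k[X_0,\dots,X_{n-1}]$ and not merely in $\K[X_0,\dots,X_{n-1}]$) requires invoking that connected unipotent groups in characteristic $p$ need not be vector groups in general but closed connected subgroups of a vector group are — a standard fact I would cite rather than reprove. The second step is elementary and I would dispatch it quickly.
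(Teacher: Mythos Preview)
Your approach diverges from the paper's at the crucial first step, and there is a real gap. The paper does not reduce $G$ to $(\K^r,+)$; instead it invokes a lemma of Blossier to \emph{extend} $f$ to a $k$-definable locally rational homomorphism $F:(\K^n,+)\to(\K,+)$, then writes $F=F_0(X_0)+\dots+F_{n-1}(X_{n-1})$ by additivity alone and observes that each $F_i$ is a one-variable additive map of $\K$ which is rational off a finite set, hence an additive polynomial. Your reduction $G\cong(\K^r,+)$ is a true fact, and your translation-of-poles argument does show that $f$ is a polynomial in the $\K^r$ coordinates. But the lemma asserts that $f$ is an additive polynomial in the \emph{ambient} variables $X_0,\dots,X_{n-1}$, and this is where your plan stalls: in characteristic $p$ the isomorphism $\phi:\K^r\to G\subset\K^n$ need not be linear, so pushing your additive polynomial on $\K^r$ through $\phi^{-1}$ only gives you an element of $k[G]$, not an additive polynomial in the $X_i$. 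Lifting an additive morphism $G\to\K$ to an additive polynomial $\K^n\to\K$ is precisely the content of the extension lemma the paper imports from Blossier, so what you flag as ``bookkeeping'' is in fact the substantive step. A smaller omission: the hypothesis gives only \emph{pointwise} rationality (the $P_{\bar g},Q_{\bar g}$ depend on $\bar g$), and one needs a compactness argument---which the paper makes explicit---to pass to finitely many rational expressions on a constructible cover and hence to a single one on a dense open.

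Your second step, reducing a multivariate additive polynomial to a $p$-polynomial via vanishing of binomial coefficients and Lucas's theorem, is correct and in fact more detailed than the paper, which simply cites Goss for this.
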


\begin{proof}
By compactness, one can find finitely many definable subsets $D_i$ of $G$ and polynomials  $P_i(X_0, \dots , X_{n-1})$ and $Q_i(X_0, \dots , X_{n-1})$ in $k [X_0, \dots , X_{n-1}]$ such that $f$ is equal to $P_i(\bar x)/Q_i(\bar x)$ on $D_i$. Using \cite[Lemma 3.8]{tb} we can extend $f$ to a $k$-definable homomorphism $F: (\K^n, +)\rightarrow (\K,+)$ which is also locally rational. Now, the functions
\[F_0(X) := F(X, 0, \dots , 0), \ \dots \ ,F_{n-1}(X) := F(0, \dots , 0, X)\]
are $k$-definable homomorphisms of $(\K,+)$ to itself. Additionally, they are rational on a finite definable decomposition of $\K$, so they are rational on a cofinite subset of $\K$. Hence every $F_i$ is an additive polynomial in $k[X]$. Thus
\[F(X_0, \dots, X_{n-1}) = F_0(X_0) + \dots +F_{n-1}(X_{n-1})\]
is an additive polynomial in $k[X_0, \dots , X_{n-1}]$ as it is a sum of additive polynomials. By \cite[Proposition 1.1.5]{dg} it is of the desired form.
\end{proof}

\begin{lem}\label{lem_GaCon}
Let $\bar{a} = (a_0, \dots, a_n )$ be a tuple in $\K^{\times}$.  Then  $G_{\bar{a}}$ is connected if and only if the set $\left\{ \frac{1}{a_0}, \dots,   \frac{1}{a_n} \right\}$
is linearly $\F_p$-independent.

\end{lem}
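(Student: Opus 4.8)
The plan is to characterize connectedness of $G_{\bar a}$ by testing it against homomorphisms to $(\K,+)$, using Lemma~\ref{lem_addpolyG}. A connected algebraic subgroup $G$ of $(\K^n,+)$ in characteristic $p$ fails to be connected precisely when it admits a proper subgroup of finite index, equivalently a nonzero homomorphism to a finite group, equivalently (since finite subgroups of $(\K,+)$ are the kernels of iterates of $\wp$, or of suitable additive polynomials) a nonzero homomorphism $f\colon G \to (\K,+)$ whose image is not all of $(\K,+)$. So first I would set up the dictionary: $G_{\bar a}$ is \emph{dis}connected iff there is a nontrivial additive polynomial relation $f$ vanishing on $G_{\bar a}^0$ but not on $G_{\bar a}$; more usefully, I will directly compute $\dim G_{\bar a}$ and its component group. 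Since $\wp$ is surjective with finite kernel $\F_p$, the map $(x_0,\dots,x_n)\mapsto(\wp(x_0),\dots,\wp(x_n))$ is an isogeny of $(\K^{n+1},+)$ onto itself, and $G_{\bar a}$ is the preimage under it of the linear subspace $V_{\bar a} = \{(u_0,\dots,u_n): a_0 u_0 = a_i u_i,\ 0\le i\le n\}$, which is the line $\K\cdot(1/a_0,\dots,1/a_n)$ (after clearing denominators). Thus $G_{\bar a}$ sits in an exact sequence $0\to \F_p^{\,n+1}\cap G_{\bar a} \to G_{\bar a}\xrightarrow{\wp^{\times(n+1)}} V_{\bar a}\to 0$ up to the usual subtlety that $\wp^{\times(n+1)}$ restricted to $G_{\bar a}$ need not be surjective onto $V_{\bar a}$ — its image is $V_{\bar a}\cap \wp(\K)^{n+1} = V_{\bar a}$ since $\wp$ is onto $\K$. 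Hence $\dim G_{\bar a} = 1$ always, and the component group of $G_{\bar a}$ is the obstruction $\ker\big(\wp^{\times(n+1)}|_{G_{\bar a}}\big)$ modulo what is hit, which I unwind below.

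The cleaner route: $G_{\bar a}$ is connected iff the only homomorphism $f\colon G_{\bar a}\to(\K,+)$ that is a locally rational (equivalently, by Lemma~\ref{lem_addpolyG}, additive polynomial) map and whose kernel contains $G_{\bar a}^0$ is... — rather, I will argue that $G_{\bar a}$ is connected iff $G_{\bar a}$ equals the image of the connected group $(\K,+)$ under a suitable parametrization. Concretely, consider the homomorphism $\theta\colon(\K,+)\to(\K^{n+1},+)$, but there is no algebraic section of $\wp$; instead parametrize via: a point of $G_{\bar a}$ is determined by $x_0\in\K$ together with a choice, for each $i\ge 1$, of $x_i$ with $\wp(x_i)=(a_0/a_i)\wp(x_0)$, and such $x_i$ exists for every $x_0$ (surjectivity of $\wp$) and is unique up to $\F_p$. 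So the projection $\pi_0\colon G_{\bar a}\to(\K,+)$, $(x_0,\dots,x_n)\mapsto x_0$, is a surjective homomorphism with finite kernel $K=\{(0,x_1,\dots,x_n): x_i\in\F_p\} \cong \F_p^{\,n}$. Therefore $G_{\bar a}$ is connected iff $\pi_0$ is an isomorphism of varieties' connected components, i.e.\ iff the extension $0\to\F_p^{\,n}\to G_{\bar a}\xrightarrow{\pi_0}(\K,+)\to 0$ splits over $(\K,+)$ — but it never literally splits algebraically, so the right statement is: $G_{\bar a}$ is connected iff $G_{\bar a}$ has no proper finite-index algebraic subgroup, iff $\pi_0|_{G_{\bar a}^0}$ is still surjective onto $(\K,+)$ (automatic) \emph{and} $G_{\bar a}^0\cap K = K$, i.e.\ $K\subseteq G_{\bar a}^0$. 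Now $G_{\bar a}^0$ is cut out inside $G_{\bar a}$ by all additive polynomials vanishing on it; an element $(0,x_1,\dots,x_n)\in K$ lies in $G_{\bar a}^0$ iff it is killed by every additive-polynomial homomorphism $G_{\bar a}\to(\K,+)$.

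The key computation is therefore to identify the group $\Hom_{\mathrm{alg}}(G_{\bar a},(\K,+))$. By Lemma~\ref{lem_addpolyG} (applied with $k$ a field of definition containing the $a_i$, and $G = G_{\bar a}^0$), every such homomorphism is given by an additive polynomial $\sum_j f_j(X_j)$ with each $f_j\in k[X]$ additive, restricted to $G_{\bar a}$. Using the defining relations $a_0\wp(X_0)=a_i\wp(X_i)$, equivalently $\wp(X_i)=(a_0/a_i)\wp(X_0)$, one can reduce any additive polynomial on $G_{\bar a}$ modulo the ideal to a "normal form", and I expect to find that the space of such homomorphisms, evaluated on $K=\F_p^{\,n}$, sees exactly the $\F_p$-linear relations among $1/a_0,\dots,1/a_n$. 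Precisely: on $K$ the relations $a_0\wp(x_0)=\dots$ become trivial ($\wp(x_i)=0$), so an additive polynomial $\sum_j f_j(X_j)$ restricted to $K$ is just $\sum_j f_j(x_j) = \sum_j (\text{linear-in-Frobenius})(x_j)$, and since on $\F_p$ Frobenius is the identity, it collapses to an $\F_p$-linear functional $\sum_j \lambda_j x_j$. The constraint that this $f$ actually extend to a homomorphism \emph{on all of $G_{\bar a}$} (not just $K$) forces, after chasing the relations, that $\sum_j \lambda_j/a_j = 0$ in $\K$ — this compatibility condition is the heart of the matter. Granting it, $K\cap G_{\bar a}^0 = \{x\in\F_p^{\,n+1}\cap K : \sum\lambda_j x_j = 0$ for all $(\lambda_j)$ with $\sum\lambda_j/a_j=0\}$, and this equals all of $K$ iff the only tuple $(\lambda_j)\in\F_p^{\,n+1}$ with $\sum_j\lambda_j/a_j=0$ is the zero tuple, i.e.\ iff $\{1/a_0,\dots,1/a_n\}$ is $\F_p$-linearly independent. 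That gives the equivalence.

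\textbf{Main obstacle.} The delicate step is the last one: pinning down $\Hom_{\mathrm{alg}}(G_{\bar a},(\K,+))$ precisely — in particular showing that an additive polynomial which is a homomorphism on $K$ extends to one on $G_{\bar a}$ \emph{if and only if} the associated coefficient vector $(\lambda_j)$ satisfies $\sum_j \lambda_j/a_j = 0$, with no further conditions. This requires carefully using that $\wp$ is surjective with kernel $\F_p$ to solve the relevant equations, and invoking Lemma~\ref{lem_addpolyG} to guarantee that no "exotic" locally-rational homomorphisms appear beyond the additive-polynomial ones. The higher-Frobenius terms $X_j^{p^i}$ must be shown not to contribute anything new to the obstruction on $K$; this should follow because on $\F_p$ all Frobenius twists coincide with the identity, but verifying that they also don't impose extra linear constraints via the defining relations of $G_{\bar a}$ is the computational core. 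The direction "$\F_p$-independent $\Rightarrow$ connected" is where one genuinely needs Lemma~\ref{lem_addpolyG}; the converse "$\F_p$-dependent $\Rightarrow$ disconnected" is easier, since a nontrivial relation $\sum_j \lambda_j/a_j = 0$ directly exhibits the additive polynomial $f(\bar X) = \sum_j \lambda_j X_j$ as a homomorphism $G_{\bar a}\to(\K,+)$ whose kernel has infinitely many components, or one exhibits a proper finite-index subgroup explicitly as $\{\bar x\in G_{\bar a}: \sum_j\lambda_j x_j\in\wp(\K)\}$... — one has to check this is proper and of index $p$, which again uses surjectivity of $\wp$.
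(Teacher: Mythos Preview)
Your easy direction is correct and matches the paper: a nontrivial relation $\sum_j \lambda_j/a_j = 0$ with $\lambda_j \in \F_p$ makes $\bar x \mapsto \sum_j \lambda_j x_j$ a surjection $G_{\bar a} \to \F_p$, whose kernel is a proper subgroup of finite index.

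The hard direction has a real gap. You want to show that every homomorphism $G_{\bar a} \to \F_p$ arises from such a relation, and you invoke Lemma~\ref{lem_addpolyG} with $G = G_{\bar a}^0$. But any homomorphism with image in $\F_p$ vanishes identically on the connected component $G_{\bar a}^0$, so applying Lemma~\ref{lem_addpolyG} there yields only the triviality $0=0$; it gives no control over the values on the other components, and the lemma cannot be applied to $G_{\bar a}$ itself without already knowing it is connected. Equivalently, your extension question ``which functionals on $K$ extend to homomorphisms $G_{\bar a}\to\F_p$?'' has the tautological answer ``exactly those vanishing on $K\cap G_{\bar a}^0$'', which is precisely the unknown. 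The paper breaks this circularity by induction on the length of $\bar a$: assuming $G_{\bar a}$ is disconnected, one first shows (using that $\pi(G_{\bar a}^0)=G_{\bar a'}$) that the fibres of $G_{\bar a}^0\to G_{\bar a'}$ are singletons, producing a definable additive section $f\colon G_{\bar a'}\to\K$. Crucially $G_{\bar a'}$ is \emph{connected by the inductive hypothesis}, so Lemma~\ref{lem_addpolyG} now legitimately applies to $f$ and forces it to be an additive polynomial; reducing modulo the defining relations of $G_{\bar a'}$ and examining the low-degree terms then extracts the forbidden $\F_p$-linear relation among the $1/a_j$. Your projection $\pi_0$ onto a single coordinate discards this inductive leverage --- projecting instead to $G_{\bar a'}$ (all but one coordinate) is what makes the key lemma usable.
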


Parts of the proof follows the one of \cite[Lemma 2.8]{ik_ts_fow}.
\begin{proof}

So suppose first that $\left\{ \frac{1}{a_0}, \dots,   \frac{1}{a_n} \right\}$
is linearly $\F_p$-dependent. Thus we can find elements $b_0, \dots , b_{n-1}$ in $\F_p$ such that
\[ b_0 \cdot  \frac{1}{a_0} + \dots + b_{n-1}  \frac{1}{a_{n-1}} =  \frac{1}{a_n} . \]
Now, let $\bar a '$ be the tuple $\bar a$ restricted to its first $n$ coordinates and fix some element $(x_0, \dots x_{n-1})$ in $G_{\bar a'}$. Let $t$ be defined as $ a_0 ( x_0^p -x_0)$. Hence, by the definition of $G_{\bar a'}$, we have that $t$ is equal to  $ a_i ( x_i^p -x_i)$ for any $ i < n$. Furthermore, we have that $(x_0, \dots, x_{n-1} , x)$ belongs to $G_{\bar a}$ if and only if
\begin{eqnarray*}
&t& = a_n (x^p - x)\\
 \Leftrightarrow&  0 &= \frac{1}{a_n} t  - (x^p-x) \\
 \Leftrightarrow & 0 & =\frac{b_0}{a_0}t + \dots +   \frac{b_{n-1}}{a_{n-1}} t  - (x^p-x) \\
 \Leftrightarrow  & 0 & = b_0 \cdot ( x_0^p -x_0)+ \dots + b_{n-1}\cdot  ( x_{n-1}^p -x_{n-1} ) -(x^p-x)\\
  \Leftrightarrow & 0 &= (b_0 \cdot x_0 + \dots +b_{n-1}\cdot x_{n-1} - x)^p - (b_0 \cdot x_0 + \dots + b_{n-1} x_{n-1} - x).
\end{eqnarray*}

In other words, $(x_0, \dots, x_{n-1} , x)$ belongs to $G_a$ if and only if $b_0 \cdot x_0 + \dots + b_{n-1} x_{n-1} - x$ is an element of $\F_p$. With this formulation we consider the following subset of $G_{\bar a}$:
\[ H = \{ (x_0, \dots x_n) \in G_{\bar a} : (x_0, \dots x_{n-1}) \in G_{\bar a '} \mbox{ and }  b_0 \cdot x_0 + \dots b_{n-1} x_{n-1} - x_n = 0\}\]
This is in fact a definable subgroup of $G_{\bar a}$ of finite index. Hence $G_{\bar a}$ is not connected.

We prove the other implication by induction on the length of the tuple $\bar{a}$ which we denote by $n$. Let $n=1$, then $G_{\bar{a}}$ is equal to $(\K, +)$ and thus connected since the additive group of an algebraically closed field is always connected.

Let $\bar{a}= (a_0, \dots, a_n)$ be an $(n+1)$-tuple such that $\left\{ \frac{1}{a_0}, \dots,   \frac{1}{a_n} \right\}$ is linearly $\F_p$-independent and suppose that the statement holds for tuples of length $n$. Define $\bar{a}'$ to be the restriction of $\bar{a}$ to the first $n$ coordinates. Observe that the natural map $\pi : G_{\bar{a}} \rightarrow G_{\bar{a}'}$ is surjective since $\K$ is algebraically closed and that
\[[G_{\bar{a}'} : \pi(G_{\bar{a}}^0)] = [\pi(G_{\bar{a}}) : \pi(G_{\bar{a}}^0)] \leq [G_{\bar{a}} : G_{\bar{a}}^0] < \infty.\]
Hence the definable group $\pi(G_{\bar{a}}^0)$ has finite index in $G_{\bar{a}'}$. As $\left\{ \frac{1}{a_0}, \dots,   \frac{1}{a_{n-1}} \right\}$ is also linearly $\F_p$-independent, the group $G_{\bar{a}'}$ is connected by assumption. Therefore $\pi(G_{\bar{a}}^0) = G_{\bar{a}'}$.

Now, suppose that $G_{\bar{a}}$ is not connected.
\\
\begin{cla} For every $\bar{x}\in G_{\bar{a}'}$, there exists a unique $x_n \in \K$ such that $(\bar{x}, x_n)\in G_{\bar{a}}^0$.
\end{cla}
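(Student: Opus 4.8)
The plan is to prove existence and uniqueness of $x_n$ separately. For existence, recall that we have already shown $\pi(G_{\bar a}^0) = G_{\bar a'}$, so for every $\bar x \in G_{\bar a'}$ there is \emph{some} $x_n$ with $(\bar x, x_n) \in G_{\bar a}^0$; it remains to establish uniqueness, i.e. that the kernel of $\pi$ restricted to $G_{\bar a}^0$ is trivial. The kernel of $\pi$ on all of $G_{\bar a}$ is $\{(0,\dots,0,x_n) : a_n \wp(x_n) = 0\} = \{(0,\dots,0,x_n) : x_n \in \F_p\}$, which is finite of order $p$. So $\ker(\pi \restriction G_{\bar a}^0)$ is either trivial or all of this $\F_p$; I would rule out the second case.

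First I would argue that if $\ker(\pi\restriction G_{\bar a}^0) = \{(0,\dots,0,x_n) : x_n \in \F_p\}$, then $G_{\bar a}^0$ would itself contain the full kernel of $\pi$, hence $G_{\bar a}^0 = \pi^{-1}(G_{\bar a'})= G_{\bar a}$ (using surjectivity of $\pi$ and $\pi(G_{\bar a}^0) = G_{\bar a'}$), contradicting the assumption that $G_{\bar a}$ is not connected. Indeed, if $\bar g \in G_{\bar a}$, pick $\bar g^0 \in G_{\bar a}^0$ with $\pi(\bar g^0) = \pi(\bar g)$; then $\bar g - \bar g^0 \in \ker\pi \subseteq G_{\bar a}^0$, so $\bar g \in G_{\bar a}^0$. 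This shows that the kernel of $\pi$ restricted to $G_{\bar a}^0$ must be trivial, which gives uniqueness.

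Once uniqueness is known, the assignment $\bar x \mapsto x_n$ is a well-defined group homomorphism $g: G_{\bar a'} \to (\K, +)$, since $G_{\bar a}^0$ is a subgroup and $\pi\restriction G_{\bar a}^0$ is a bijection onto $G_{\bar a'}$; its graph is exactly $G_{\bar a}^0$. The hard part — and the reason the Claim is isolated as a stepping stone — is what comes after: one wants to use Lemma \ref{lem_addpolyG} to conclude that $g$ is an additive polynomial in $\F_p$-coefficients (after passing to a suitable algebraically closed field of definition $k$ containing $\bar a$), and then use the defining relations $a_i \wp(x_i) = a_n \wp(g(\bar x))$ on $G_{\bar a'}$ together with $\F_p$-independence of the $1/a_i$ to force a contradiction with $g$ being polynomial. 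For the present Claim itself, though, the only genuine point is the kernel computation and the connectedness argument above; the local rationality hypothesis needed later is not yet in play, so I expect the main subtlety to be merely bookkeeping: making sure $G_{\bar a}^0$, being $\bar a$-definable of finite index, interacts correctly with the finite kernel of $\pi$ so that the dichotomy "trivial or everything" is exhaustive.
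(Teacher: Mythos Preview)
Your proof is correct and is essentially the same argument as the paper's: both use that $\pi(G_{\bar a}^0)=G_{\bar a'}$ gives existence, compute $\ker\pi=\{\bar 0\}\times\F_p$, observe that a nontrivial intersection with $G_{\bar a}^0$ forces the whole kernel into $G_{\bar a}^0$ (since $\F_p$ has prime order), and then deduce $G_{\bar a}^0=G_{\bar a}$, contradicting non-connectedness. The only cosmetic difference is that you phrase things in terms of kernels while the paper picks two explicit preimages; your remark about needing $\bar a$-definability for the ``trivial or everything'' dichotomy is unnecessary, since this follows purely from $|\ker\pi|=p$ being prime.
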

\begin{proof}[Proof of the Claim] Assume there exists $\bar{x} \in \K^n$ and two distinct elements $x_n^0$ and $x_n^1$ of $\K$ such that $( \bar{x}, x_n^0)$ and $( \bar{x}, x_n^1)$ are elements of $G_{\bar{a}}^0$. As $G_{\bar{a}}^0$ is a group, their difference $( \bar{0}, x_n^0 - x_n^1)$ belongs also to $G_{\bar{a}}^0$. Thus, by definition of $G_{\bar{a}}$, its last coordinate $x_n^0 - x_n^1$ lies in $\F_p$. So $( \bar{0},\F_p)$ is a subgroup of  $G_{\bar{a}}^0$. Take an arbitrary element $( \bar{x}, x_n)$ in $G_{\bar{a}}$. As $\pi(G_{\bar{a}}^0) = G_{\bar{a}'}$, there exists $x_n' \in \K$ with $( \bar{x}, x_n') \in G_{\bar{a}}^0$. Again, the difference of the last coordinate $x_n' - x_n$ lies in $\F_p$. So
\[( \bar{x}, x_n) = (\bar{x}, x_n') - (\bar{0}, x_n' - x_n) \in G_{\bar{a}}^0.\]
This leads to a contradiction, as $G_{\bar{a}}^0$ is assumed to be a proper subgroup of $G_{\bar{a}}$.
\end{proof}

Thus, we can fix a definable additive function $f: G_{\bar{a}'} \rightarrow \K$ that sends every tuple to this unique element. Note that $G_{\bar{a}}$ and hence also $G_{\bar{a}}^0$ are defined over $\bar{a}$. So the function $f$ is defined over $\bar a$ as well. Now, let $\bar{x} = (x_0, \dots , x_{n-1})$ be any tuple in $G_{\bar{a}'}$ and set $L := \F_p(a_0, \dots, a_n)$. Then:
\[x_n := f(\bar{x}) \in \dcl(\bar{a}, \bar{x}).\]
In other words, $x_n$ is definable over $L(x_0, \dots, x_{n-1})$ which simply means that it belongs to the purely inseparable closure $\bigcup_{n \in \N} L(x_0, \dots, x_{n-1})^{p^{-n}}$ of $L(x_0, \dots , x_{n-1})$ by \cite[Chapter 4, Corollary 1.4]{eb}. Since there exists an $l \in L(x_0)$ such that $x_n^p-x_n-a_n^{-1}l= 0$, the element $x_n$ is separable over $L(x_0, \dots, x_{n-1})$. So it belongs to $L(x_0, \dots, x_{n-1})$ which implies that there exists some mutually prime polynomials $g , h \in L[X_0, \dots , X_{n-1}]$ such that $x_n = h(x_0, \dots , x_{n-1}) / g(x_0, \dots , x_{n-1})$. Thus, by Lemma \ref{lem_addpolyG} the definable function $f(X_0, \dots , X_{n-1})$ we started with  is an additive polynomial in $n$ variables over $L^{\alg}$ and there exists $c_{j,i}$ in $L^{\alg}$ and natural numbers $m_j$ such that
\[f(X_0, \dots , X_{n-1}) = \sum_{i=0}^{m_0} c_{0, i} X_0^{p^i} + \dots +  \sum_{i=0}^{m_{n-1}} c_{n-1,i} X_{n-1}^{p^i}.\]
Using the identities $X_i^p - X_i = \frac{a_0}{a_i} (X_0^p - X_0)$ in $G_{\bar{a}}^0$,  there are $\beta_j$ in $L^{\alg}$ and $g(X_0) = \sum_{i=1}^{m_0} d_{i} X_0^{p^i}$ an additive polynomial in $L^{\alg}[X_0]$ with summands of powers of $X_0$ greater or equal to $p$ such that
\[ f(X_0, \dots , X_{n-1}) = g(X_0) +  \sum_{j=0}^{n-1} \beta_j \cdot X_j.\]
Since the image under $f$ of the vectors $(0,1, 0, \dots, 0), (0,0,1, 0, \dots, 0), \dots , (0, \dots, 0,1) $ has to be an element of $\F_p$, for $0 < i < n$ the $\beta_i$'s have to be elements of $\F_p$. On the other hand, for any element $( x_0, \dots, x_n)$ of $G_{\bar{a}}^0$ we have that $a_n(x_n^p - x_n) = a_0(x_0^p - x_0)$. Replacing $x_n$ by $f(x_0, \dots , x_{n-1})$ we obtain
\begin{align*}
0 & = a_n  \left[f(x_0, \dots , x_{n-1})^p - f(x_0, \dots , x_{n-1})\right]  - a_0(x_0^p - x_0) \\
&= a_n \left[g(x_0)^p - g(x_0) + (\beta_0^p x_0^p - \beta_0 x_0)+ \sum_{j=1}^{n-1} \beta_j( x_j^p- x_j)\right]  - a_0(x_0^p - x_0) .
\end{align*}
Using again the identities $x_i^p - x_i = \frac{a_0}{a_i} (x_0^p - x_0)$ in $G_{\bar{a}}^0$ we obtain a polynomial in one variable
\[P(X) = a_n\left[g(X)^p - g(X) + (\beta_0^p X^p - \beta_0 X)+ \sum_{j=1}^{n-1} \beta_j \frac{a_0}{a_j} (X^p - X)\right]- a_0(X^p - X)\]
which vanishes for all elements $x_0$ of $\K$ such that there exists $x_1, \dots , x_{n-1}$ in $\K$ with $(x_0, \dots, x_{n-1}) \in G_{\bar{a}'}$. In fact, this is true for all elements of $\K$. Hence, $P$ is the zero polynomial. Notice that $g(X)$ appears in a $p$th-power. Since it contains only summands of power of $X$ greater or equal to $p$, the polynomial $g(X)^p$ contains only summands of power of $X$ strictly greater than $p$. As $X$ only appears in powers less or equal to $p$ in all other summands of $P$, the polynomial $g(X)$ has to be the zero polynomial itself. By the same argument as for the other $\beta_j$, the coefficient $\beta_0$ has to belong to $\F_p$ as well. Dividing by $a_0a_n$ yields that
\[\sum_{j=0}^{n} \beta_j \frac{1}{a_j} (X^p - X) \]
with $\beta_n := -1$ is the zero polynomial. Thus
\[\sum_{j=0}^{n} \beta_j \frac{1}{a_j}  =0\]
As $\beta_n$ is different from $0$ and all $\beta_i$ are elements of $\F_p$, this contradicts the assumption and the lemma is established.
\end{proof}

Using Lemma \ref{lem_GaCon}, a stronger version of \cite[Lemma 2.8]{ik_ts_fow} together with \cite[Corollary 2.6]{ik_ts_fow}, we obtain the following corollary in the same way as Kaplan, Scanlon and Wagner obtain \cite[Corollary 2.9]{ik_ts_fow}.

\begin{cor}\label{cor_ConComIso}
Let $k$ be a perfect subfield of $\K$ and $\bar{a} \in k^n$ be as in the previous lemma. Then $G_{\bar{a}}$ is isomorphic over $k$ to $(\K, +)$. In particular, for any field $K \geq k$ with $K \leq \K$, the group $G_{\bar{a}}(K)$ is isomorphic to $(K, +)$.
\end{cor}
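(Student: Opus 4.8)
The plan is to reduce to the structural classification of connected commutative unipotent algebraic groups over a perfect field. By Lemma \ref{lem_GaCon}, the hypothesis on $\bar a$ guarantees that $G_{\bar a}$ is a connected algebraic subgroup of $(\K^n,+)$, hence a connected commutative unipotent group, and it is defined over $k$ since $\bar a \in k^n$. We want to produce a $k$-definable (equivalently, $k$-rational) group isomorphism $G_{\bar a} \to (\K,+)$. First I would observe that $\dim G_{\bar a} = 1$: the defining equations $a_0\wp(x_0) = a_i\wp(x_i)$ for $1 \le i < n$ cut out a group whose projection to the first coordinate $x_0$ is dominant (for each $x_0$ and each $i$ the equation $a_i\wp(x_i) = a_0\wp(x_0)$ is solvable in $\K$ since $\wp$ is surjective on an algebraically closed field), and the fibre over a generic $x_0$ is finite (each $x_i$ satisfies the separable degree-$p$ equation $\wp(x_i) = (a_0/a_i)\wp(x_0)$, so has $p$ solutions). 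Thus $G_{\bar a}$ is a one-dimensional connected unipotent group over $k$.

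The key input is that a smooth connected one-dimensional unipotent group over a \emph{perfect} field is isomorphic, over that field, to $(\mathbb{G}_a,+)$ — this is precisely the perfect-field case of Russell's classification (the same fact invoked via \cite[Corollary 2.6]{ik_ts_fow} and the strengthened \cite[Lemma 2.8]{ik_ts_fow} in Kaplan--Scanlon--Wagner's proof of \cite[Corollary 2.9]{ik_ts_fow}). So the skeleton is: (1) identify $G_{\bar a}$ as a $k$-definable connected algebraic subgroup of $(\K^n,+)$, which is automatically smooth and unipotent; (2) compute its dimension to be $1$ as above; (3) apply the cited classification result over the perfect field $k$ to obtain a $k$-isomorphism $\iota : G_{\bar a} \to (\K,+)$; (4) since an isomorphism of algebraic groups defined over $k$ is given by $k$-rational functions, its restriction to $K$-points is a group isomorphism $G_{\bar a}(K) \to (K,+)$ for every intermediate field $k \le K \le \K$, giving the "in particular" clause. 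Steps (1), (2), (4) are routine; step (3) is where all the content lies and is supplied by the cited lemmas, following verbatim the argument of \cite[Corollary 2.9]{ik_ts_fow} with $G_{\bar a}$ in place of the group considered there.

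The main obstacle — or rather, the only place where care is needed — is verifying that $G_{\bar a}$ satisfies the exact hypotheses of the strengthened \cite[Lemma 2.8]{ik_ts_fow}: namely that it is a connected one-dimensional unipotent $k$-group, which requires knowing connectedness (Lemma \ref{lem_GaCon}) together with the dimension count, and that perfectness of $k$ is genuinely used to conclude $k$-rationality of the isomorphism rather than merely a $k$-isomorphism after a purely inseparable base extension. Once these are in place, the conclusion and its corollary for $K$-points follow formally, exactly as in \cite[Corollary 2.9]{ik_ts_fow}.
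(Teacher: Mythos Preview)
Your proposal is correct and follows essentially the same route as the paper, which simply invokes Lemma~\ref{lem_GaCon} for connectedness, cites \cite[Corollary~2.6]{ik_ts_fow} for the structure theory, and defers to the argument of \cite[Corollary~2.9]{ik_ts_fow}; you have usefully made the dimension-one count and the passage to $K$-points explicit. One small clarification on your citations: in the paper's phrasing, Lemma~\ref{lem_GaCon} \emph{is} the ``stronger version of \cite[Lemma~2.8]{ik_ts_fow}'' (both are connectedness statements, the present one for longer tuples), while the classification of connected one-dimensional unipotent groups over a perfect field as $k$-forms of $\mathbb{G}_a$ is the content of \cite[Corollary~2.6]{ik_ts_fow}---so the hypotheses you want to check are those of the latter, not the former.
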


\section{Artin-Schreier extensions}\label{sec_ASE}

\begin{defn}
Let $K$ be a field of characteristic $p>0$ and $\wp(x) $ the additive homomorphism $x \mapsto x^p -x$. A field extension $L / K$ is called an \emph{Artin-Schreier extension} if $L = K(a)$ with $\wp(a) \in K$. We say that $K$ is \emph{Artin-Schreier closed} if it has no proper Artin-Schreier extension i.\ e. $\wp(K) = K$.
\end{defn}

In the following remark, we produce elements from an algebraically independent array of size $m^n$ which fit the condition of Lemma \ref{lem_GaCon}.

\begin{rem}\label{rem_n-dep}
Let $\{\alpha _{i,j} : i \in n, j \in m\} $ be a set of algebraically independent elements in $\K$. Then the tuple $( a_{(i_0, \dots, i_{n-1})}: (i_0, \dots, i_{n-1}) \in m^n)$ with $a_{(i_0, \dots, i_{n-1})}=  \prod_{l=0}^{n-1}\alpha _{l, i_l}$ and ordered lexicographically satisfies the condition of Lemma \ref{lem_GaCon}.
\end{rem}
\begin{proof}
Suppose that there exists a tuple of elements $(\beta_{(i_0, \dots, i_{n-1})}:{(i_0, \dots, i_{n-1}) \in m^n})$ in $\F_p$ not all equal to zero such that
\[\sum_{ (i_0, \dots, i_{n-1}) \in m^n} \beta_{(i_0, \dots, i_{n-1})} \frac{1}{a_{(i_0, \dots, i_{n-1}} )} =0\]
Then the $\alpha_{i,j}$ satisfy:
\[\sum_{ (i_0, \dots, i_{n-1}) \in m^n} \beta_{(i_0, \dots, i_{n-1})} \cdot \left(\prod_{\{(k,l)\neq(j,i_j) : j \leq n-1 \} } \alpha_{k,l}  \right) =0\]
which contradicts the algebraic independence of the $\alpha_{i,j}$.
\end{proof}
We can now adapt the proof in \cite{ik_ts_fow} showing that an infinite NIP field is Artin-Schreier closed to obtain the same result for a $n$-dependent field.

\begin{thm}\label{thm_ASENIPn}
Any infinite $n$-dependent field is Artin-Schreier closed.
\end{thm}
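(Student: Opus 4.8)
The plan is to adapt the classical argument from Kaplan–Scanlon–Wagner that infinite NIP fields are Artin-Schreier closed, using the $n$-dependent Baldwin–Saxl condition (Proposition~\ref{prop_cc}) in place of the ordinary one, and feeding it the connected vector groups $G_{\bar a}$ constructed in Section~\ref{sec_svg}. So suppose $K$ is an infinite $n$-dependent field of characteristic $p>0$ which is \emph{not} Artin-Schreier closed, i.e. $\wp(K)\neq K$. We may pass to a sufficiently saturated elementary extension, which remains $n$-dependent and non-Artin-Schreier-closed. Fix some $c\in K\setminus\wp(K)$. The key definable family is: for $a\in K^\times$, the additive subgroup $\wp(aK) = \{a\wp(x) : x\in K\}$; equivalently one works with the fibers of $x\mapsto a\wp(x)$ through $c$, and the point is that for $\bar a=(a_0,\dots,a_{n-1})$ the group $G_{\bar a}(K)$ has a definable surjection onto intersections of cosets of such subgroups.

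Concretely, I would consider the definable subgroups $H_{\bar b}:=\psi(\bar b;K)$ where $\psi(\bar y_0,\dots,\bar y_{n-1};x)$ expresses membership in (a shift of) $G_{\bar a}$; since the coordinates of $G_{\bar a}$ are tied together by the equations $a_0\wp(x_0)=a_i\wp(x_i)$, controlling the group amounts to controlling $\bigcap$ of the sets $\{x : a_i\wp(x)=t\}$ as $a_i$ ranges over products $\prod_l \alpha_{l,i_l}$ from an algebraically independent array $\{\alpha_{i,j}\}$ (Remark~\ref{rem_n-dep} guarantees the $1/a_\eta$ are $\F_p$-linearly independent, so by Lemma~\ref{lem_GaCon} the relevant $G_{\bar a}$ is connected, and by Corollary~\ref{cor_ConComIso} it is isomorphic over the prime field to $(K,+)$). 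Apply Proposition~\ref{prop_cc} to the relevant $n$-dependent formula $\psi$: it yields $m_\psi$ such that for a $d\times n$-indexed array with $d\geq m_\psi$, some $H_\nu$ is redundant in the intersection $\bigcap_{\eta\in d^n}H_\eta$. Choosing the array from the algebraically independent $\alpha_{i,j}$'s, however, the connectivity/isomorphism statement forces each corresponding group to be a "new" copy of $(K,+)$ and each $H_\eta$ to genuinely cut the intersection down — an index count (each successive $H_\eta$ multiplies the index by $[K:\wp(K)]\cdot$ something, or at least strictly increases it) shows no $H_\nu$ can be redundant, giving the contradiction.

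The main obstacle — and the step I would spend the most care on — is the bookkeeping that translates "$G_{\bar a}$ is connected and $\cong (K,+)$ over the prime field" into the strict-decrease statement "$\bigcap_{\eta\in d^n}H_\eta \subsetneq \bigcap_{\eta\neq\nu}H_\eta$ for every $\nu$", for arbitrarily large $d$. In the NIP ($n=1$) case this is the observation that $\dim_{\F_p}\!\big(K/\wp(K)\big)$ would have to be infinite, contradicting Baldwin–Saxl; here one needs the $n$-variable analogue, namely that from the algebraically independent array one produces, for each $d$, a configuration of $d^n$ subgroups in "general position" so that removing any one strictly enlarges the intersection. This is exactly what Remark~\ref{rem_n-dep} is set up to provide: the $\F_p$-linear independence of $\{1/a_\eta : \eta\in m^n\}$ is the precise algebraic input ensuring no redundancy, and the connectivity from Lemma~\ref{lem_GaCon} is what upgrades "as subgroups of the ambient field" to "as the definable groups $H_\eta$". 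Once that general-position claim is in hand, the contradiction with Proposition~\ref{prop_cc} is immediate: it asserts some $H_\nu$ must be redundant for $d\geq m_\psi$, but we have just exhibited, for every $d$, an array with none redundant. Hence $\wp(K)=K$, i.e. $K$ is Artin-Schreier closed.
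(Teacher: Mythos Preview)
Your proposal assembles the right ingredients (Proposition~\ref{prop_cc}, Lemma~\ref{lem_GaCon}, Remark~\ref{rem_n-dep}, Corollary~\ref{cor_ConComIso}) but the logical flow is inverted relative to the paper, and the step you flag as ``the main obstacle'' is in fact a genuine gap.

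You argue by contradiction: assume $\wp(K)\neq K$ and try to show that for the array $a_\eta=\prod_l\alpha_{l,i_l}$ the subgroups $H_\eta=a_\eta\cdot\wp(K)$ are in general position, so that no $H_\nu$ is redundant, contradicting Baldwin--Saxl. But nothing you cite gives this. Connectivity of $G_{\bar a}$ is a statement over the algebraic closure $\K$, and the isomorphism $G_{\bar a}(K)\cong(K,+)$ of Corollary~\ref{cor_ConComIso} says nothing about the inclusion relations among the $H_\eta$'s inside $K$. The $\F_p$-linear independence of the $1/a_\eta$ is the hypothesis \emph{for} connectivity, not a substitute for the missing argument. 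Your ``index count'' is left as a placeholder; there is no reason a priori why $\wp(K)\neq K$ should force $\bigcap_{\eta\neq\nu}H_\eta\not\subseteq H_\nu$ for \emph{every} $\nu$.

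The paper runs the argument in the opposite direction, and this is where the actual content lies. One does \emph{not} assume $\wp(K)\neq K$. Instead, Baldwin--Saxl (Proposition~\ref{prop_cc}) is applied to $\psi(x;y_0,\dots,y_{n-1}):=\exists t\,(x=\prod_i y_i\cdot\wp(t))$ to \emph{produce} a redundancy
\[
\bigcap_{\eta\in m^n} a_\eta\cdot\wp(K)=\bigcap_{\eta\neq\nu} a_\eta\cdot\wp(K).
\]
This equality is exactly what makes the projection $\pi:G_{\bar a}(K)\to G_{\bar a'}(K)$ surjective. Via the isomorphisms of Corollary~\ref{cor_ConComIso} one obtains a definable surjective endomorphism $\rho$ of $(K,+)$ with kernel of size $p$. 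The crucial missing ingredient in your sketch is then \cite[Remark~4.2]{ik_ts_fow}: any such $\rho$ (after normalizing the kernel to $\F_p$) has the form $a\cdot(x^p-x)^{p^n}$, and its surjectivity on $K$ forces $\wp(K)=K$. That structural result about additive endomorphisms is what closes the argument; connectivity alone does not.
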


\begin{proof}
Let $K$ be an infinite $n$-dependent field and we may assume that it is $\aleph_0$-saturated. We work in a big algebraically closed field $\K$ that contains all objects we will consider. Let $k = \bigcap_{l \in \omega} K^{p^l}$, which is a type-definable infinite perfect subfield of $K$. We consider the formula $\psi(x;y_0, \dots,y_{n-1}) := \exists t\ (x=\prod_{i=0}^{n-1}y_i \cdot \wp(t))$ which for every tuple $(a_0, \dots , a_{n-1})$ in $k^n$ defines an additive subgroup of $(K,+)$. Let $m \in \omega$ be the natural number given by Proposition \ref{prop_cc} for this formula. Now, we fix an array of size $m^n$ of algebraically independent elements $\{\alpha _{i,j} : i \in n, j \in m\} $ in $k$ and set $a_{(i_0, \dots, i_{n-1})}=  \prod_{l=0}^{n}\alpha _{l, i_l}$. By choice of $m$, there exists $(j_0, \dots, j_{n-1}) \in m^n$ such that
\begin{eqnarray}
\bigcap_{(i_0, \dots, i_{n-1}) \in m^n} a_{(i_0, \dots, i_{n-1})} \cdot \wp(K) = \bigcap_{(i_0, \dots, i_{n-1}) \neq (j_0, \dots, j_{n-1})} a_{(i_0, \dots, i_{n-1})} \cdot \wp(K).
\end{eqnarray}
By reordering the elements, we may assume that $(j_0, \dots, j_{n-1})= (m, \dots, m)$. Let $\bar{a}$ be the tuple $(a_{(i_0, \dots, i_{n-1})} : (i_0, \dots, i_{n-1}) \in m^n)$ ordered lexicographically and $\bar{a}'$ the restriction to $m^n -1$ coordinates (one coordinate less).

We consider the groups $G_{\bar{a}}$ and respectively $G_{\bar{a}'}$ defined as in Definition \ref{def_Ga}. Using Remark \ref{rem_n-dep} and Corollary \ref{cor_ConComIso} we obtain the following commuting diagram.
$$\xymatrix{
G_{\bar{a}} \ar[0,1]^{\pi} \ar[d]^\simeq
 & G_{\bar{a}'} \ar[d]^\simeq\\
(\K, +) \ar[0,1]^{\rho} & (\K,+)}$$
As the vertical isomorphisms are defined over $k$, this diagram can be restricted to $K$. Note that $\pi$ and therefore also $\rho$ stays onto for this restriction by equality (6.1) and that the size of $\ker(\rho)$ has to be $p$. Choose a nontrivial element $c$ in the kernel of $\rho$ and let $\rho'$ be equal to $\rho(c\cdot x)$. Observe that $\rho'$ is still a morphism from $(\K, +)$ to $(\K, +)$, its restriction to $K$ is still onto and its kernel is equal to $\F_p$. Then \cite[Remark 4.2]{ik_ts_fow} ensures that $\rho'$ is of the form $a \cdot (x^p - x)^{p^n}$ for some $a$ in $K$. Finally, let $l \in K$ be arbitrary. Since $\rho'\upharpoonright K$ is onto and $X^{p^n}$ is an inseparable polynomial in characteristic $p$, there exists $h \in K$ with $l = h^p-h$. As $l \in K$ was arbitrary, we get that $\wp(K) = K$ and we can conclude.
\end{proof}

The proof of \cite[Corollary 4.4]{ik_ts_fow}  adapts immediately and yields the following corollary.
\begin{cor}\label{cor_pnotdiv}
If $K$ is an infinite $n$-dependent field of characteristic $p>0$ and $L/K$ is a finite separable extension, then $p$ does not divide $[L:K]$.
\end{cor}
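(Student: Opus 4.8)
The statement to prove is Corollary \ref{cor_pnotdiv}: if $K$ is an infinite $n$-dependent field of characteristic $p>0$ and $L/K$ is a finite separable extension, then $p \nmid [L:K]$. The natural strategy, following \cite[Corollary 4.4]{ik_ts_fow}, is to argue by contraposition: suppose $p \mid [L:K]$, and produce a proper Artin-Schreier extension of $K$ (or of a finite extension of $K$ that is still $n$-dependent), contradicting Theorem \ref{thm_ASENIPn}. The key point is that finite extensions of $n$-dependent fields are again $n$-dependent — this is because a finite extension is interpretable in the base field (one codes $L$ by $n$-tuples over $K$ with field operations given by fixed polynomials), and interpretability preserves $n$-dependence just as it does for NIP.

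First I would reduce to the case where $L/K$ is Galois. Pass to the Galois closure $M/K$ of $L/K$; since $L/K$ is separable, $M/K$ is a finite Galois extension, and $[L:K] \mid [M:K]$, so $p \mid [M:K] = |\Gal(M/K)|$. By Cauchy's theorem the group $\Gal(M/K)$ has an element $\sigma$ of order $p$; let $F = M^{\langle\sigma\rangle}$ be its fixed field. Then $M/F$ is a cyclic Galois extension of degree $p$. Now $F$ is a finite extension of $K$, hence still infinite and $n$-dependent by the interpretability remark above.

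Next, since $M/F$ is cyclic of degree $p$ and $\operatorname{char} F = p$, the Artin-Schreier theorem tells us that $M = F(\alpha)$ with $\wp(\alpha) = \alpha^p - \alpha \in F$ and $\alpha \notin F$; in particular $\wp(F) \neq F$, so $F$ has a proper Artin-Schreier extension. But this contradicts Theorem \ref{thm_ASENIPn} applied to the infinite $n$-dependent field $F$. Hence $p \nmid [L:K]$.

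**Main obstacle.** The only real content beyond citing Theorem \ref{thm_ASENIPn} and standard Galois theory is the preservation of $n$-dependence under finite field extensions. I expect this to be the step requiring care: one must check that the standard coding of $L$ inside $K^{[L:K]}$ makes $L$ (with its ring structure) an interpretable — indeed definable — structure in $K$, and then invoke the fact that $n$-dependence, like NIP, transfers from a structure to any structure interpretable in it. This last fact should follow from the characterization via formulas of the form $\phi(\bar y_0,\dots,\bar y_{n-1};x)$ with $|x|=1$ (Fact \ref{fact_x1}) together with closure under the relevant composition, exactly as in the NIP case treated in \cite{ik_ts_fow}; since the paper already leans on \cite{ik_ts_fow} throughout, the phrase "adapts immediately" is justified, but I would at least state explicitly that finite extensions of $n$-dependent fields are $n$-dependent and point to the interpretability argument.
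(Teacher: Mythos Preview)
Your proposal is correct and matches the paper's approach: the paper does not spell out a proof but simply states that the argument of \cite[Corollary 4.4]{ik_ts_fow} adapts immediately, and what you have written is precisely that adaptation (Galois closure, Cauchy's theorem, fixed field of an order-$p$ element, Artin--Schreier, contradiction with Theorem~\ref{thm_ASENIPn}). Your explicit remark that finite extensions remain $n$-dependent via interpretability is exactly the point the paper relies on elsewhere (e.g.\ in Sections~\ref{sec_PAC} and~\ref{sec_app}), so nothing further is needed.
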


\section{Non separably closed PAC field}\label{sec_PAC}

The goal of this section is to generalize a result of Duret \cite{jld}, namely that the theory of a non separably closed PAC field has the IP property. To do so we need the following two facts.

\begin{fac}\label{facIP}\cite[Lemme 6.2]{jld}
Let $K$ be a field and $k$ be a subfield of $K$ which is PAC. Let $p$ be a prime number which does not coincide with the characteristic of $K$ such that $k$ contains all $p$th roots of unity and there exists an element in $k$ that does not have a $p$th root in $K$. Let $(a_i: i \in \omega)$ be a set of pairwise different elements of $k$ and let $I$ and $J$ be finite disjoint subsets of $\omega$, then $K$ realizes
\[\{ \exists y (y^p = x+ a_i):i \in I\} \cup \{\neg \exists y (y^p = x + a_j): j \in J \}.\]
\end{fac}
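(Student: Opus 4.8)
The plan is to verify the two sentences

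\[\exists y\,(y^p = x + a_i), \qquad i\in I,\qquad\text{and}\qquad \neg\exists y\,(y^p = x + a_j),\qquad j\in J,\]

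simultaneously, by producing a single witness for $x$ inside $k$ using the PAC property of $k$ (not of $K$). Since $k$ is PAC, it suffices to find an absolutely irreducible $k$-variety $V$ together with a $k$-rational point constraint: concretely, I would work with the variety
\[
V \;=\;\Bigl\{\,(x,(y_i)_{i\in I})\; :\; y_i^{p}=x+a_i\ \text{for all } i\in I\,\Bigr\}\subseteq \mathbf{A}^{1+|I|},
\]
parametrised by the single coordinate $x$. Because $\mathrm{char}(K)\neq p$ and $k$ contains the $p$th roots of unity, each equation $y_i^{p}=x+a_i$ defines a Kummer cover of the $x$-line; the fibre product over $x$ of these covers is absolutely irreducible precisely when the classes of the $x+a_i$ are multiplicatively independent modulo $p$th powers in the function field $\overline{k}(x)$, and this holds because the $a_i$ are pairwise distinct (the divisors of the $x+a_i$ on $\mathbf{P}^1$ are distinct points, hence $\mathbf{F}_p$-linearly independent in the divisor class group modulo $p$). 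Thus $V$ is an absolutely irreducible curve defined over $k$.

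Next, I would incorporate the negative conditions. Since $k$ is PAC it is existentially closed in every regular extension, so it has infinitely many $k$-points on $V$; I must choose one whose $x$-coordinate additionally satisfies, for each $j\in J$, that $x+a_j$ is \emph{not} a $p$th power in $K$. Here is where the hypothesis "there is an element of $k$ with no $p$th root in $K$" enters: fix such an element $c\in k$. The set of $x\in k$ for which $x+a_j$ \emph{is} a $p$th power in $K$ is, intersected with the image of $V(k)\to k$, a thin/bounded set — more carefully, the argument of Duret shows that $\{x : x+a_j \in K^{p}\}$ cannot contain the $x$-projection of $V(k)$ for all $j$ simultaneously, because otherwise composing with a suitable Kummer twist by $c$ would force $c\in K^{p}$. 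So one either argues by a counting/genericity estimate over the pseudo-finite-like residue behaviour, or, following \cite{jld}, enlarges $V$ to a further cover that also trivialises the $x+a_j$ and derives a contradiction with $c\notin K^{p}$ if no good $x$ existed. Either way one extracts $x\in k$ lying on $V$ and avoiding all $|J|$ bad conditions.

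Finally, with such an $x\in k\subseteq K$ chosen, the $k$-point of $V$ supplies $y_i\in k\subseteq K$ with $y_i^{p}=x+a_i$ for every $i\in I$, giving the positive statements; and by construction $x+a_j\notin K^{p}$ for every $j\in J$, giving the negative ones. The main obstacle is the second paragraph: ensuring the positive cover $V$ stays absolutely irreducible while the negative twists are superimposed, i.e. controlling the interaction between the Kummer extensions indexed by $I$ and the forbidden ones indexed by $J$ — this is exactly the delicate multiplicative-independence bookkeeping in $\overline{k}(x)^{\times}/(\overline{k}(x)^{\times})^{p}$, and it is what makes essential use of the $a_i$'s being pairwise distinct together with $c\notin K^{p}$. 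The reduction of everything else to the defining PAC property of $k$ is, by contrast, routine.
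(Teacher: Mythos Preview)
The paper does not give its own proof of this statement: it is recorded as a \emph{Fact} with a bare citation to \cite[Lemme 6.2]{jld}. So there is no in-paper argument to compare against, and the question is only whether your sketch reproduces Duret's lemma correctly.

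Your treatment of the positive part is fine: the fibre product of the Kummer covers $y_i^p=x+a_i$ over the $x$-line is absolutely irreducible because the linear forms $x+a_i$ have pairwise distinct zeros and hence are $\F_p$-independent in $\overline{k}(x)^{\times}/(\overline{k}(x)^{\times})^{p}$; PAC then supplies $k$-points.

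The gap is in your handling of the negative conditions. Neither of the two options you offer is actually carried out, and the first (``counting/genericity'') is not how Duret proceeds and would need real work to make precise for an arbitrary PAC field. The second option is the right one, but your description (``trivialises the $x+a_j$'', ``derives a contradiction with $c\notin K^p$ if no good $x$ existed'') is both vague and slightly off: it is not a proof by contradiction. What Duret does is build the negative constraints \emph{into the variety} from the outset, via the $c$-twist. Concretely, one takes
\[
W=\Bigl\{(x,(y_i)_{i\in I},(z_j)_{j\in J}):\ y_i^{p}=x+a_i\ (i\in I),\ c\,z_j^{p}=x+a_j\ (j\in J)\Bigr\}.
\]
Since the $a_i$ ($i\in I\cup J$) are pairwise distinct, the elements $x+a_i$ and $c^{-1}(x+a_j)$ are still $\F_p$-independent modulo $p$th powers in $\overline{k}(x)$ (the constant $c$ contributes nothing to the divisor), so $W$ is an absolutely irreducible curve over $k$. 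PAC gives a $k$-point with $z_j\neq 0$ for all $j$ (only finitely many $x$ are excluded). Then $x+a_i\in k^{p}\subseteq K^{p}$ for $i\in I$, while for $j\in J$ one has $x+a_j=c\,z_j^{p}$; if this were a $p$th power in $K$ then $c=(w/z_j)^{p}\in K^{p}$, contradicting the hypothesis on $c$. That is the whole argument, and it is direct, not by contradiction. Your ``main obstacle'' paragraph should be replaced by this explicit construction.
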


\begin{fac}\label{facPAC}\cite[Lemme 2.1]{jld}
Every finite separable extension of a PAC field is PAC.
\end{fac}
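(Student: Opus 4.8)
To prove that a finite separable extension $L$ of a PAC field $k$ is again PAC, I would convert a rational-point problem over $L$ into one over $k$ by Weil restriction of scalars. Recall that a field is PAC precisely when every nonempty absolutely irreducible affine variety over it has a rational point (an arbitrary absolutely irreducible variety contains a nonempty absolutely irreducible open affine, so affine varieties suffice), so the goal is to produce an $L$-point on an arbitrary nonempty absolutely irreducible $L$-variety $V\subseteq\mathbb{A}^N_L$.

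Set $n=[L:k]$ and let $W:=R_{L/k}(V)$ be the Weil restriction: an affine $k$-variety with the functorial identity $W(A)=V(A\otimes_k L)$ for every $k$-algebra $A$, so in particular $W(k)=V(L)$. Concretely, after fixing a primitive element $\alpha$ for $L/k$ one writes each coordinate $x_\mu$ of a putative $L$-point as $\sum_{j<n}y_{\mu,j}\alpha^{j}$, substitutes into the defining equations of $V$, and reads off the coefficients of $1,\alpha,\dots,\alpha^{n-1}$ in the power basis; this produces a polynomial system over $k$ in the $Nn$ variables $y_{\mu,j}$ whose solution set is $W$, and whose $k$-solutions correspond exactly to $L$-points of $V$. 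Granting for a moment that $W$ is a nonempty absolutely irreducible variety over $k$, the hypothesis that $k$ is PAC yields $W(k)\neq\emptyset$, hence $V(L)\neq\emptyset$; since $V$ was arbitrary, $L$ is PAC.

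The crux — and the one place where separability of $L/k$ enters — is that $W$ is absolutely irreducible over $k$. One base changes to $\overline{k}$ and uses that Weil restriction commutes with base change: $W_{\overline{k}}\cong R_{(L\otimes_k\overline{k})/\overline{k}}\big(V_{L\otimes_k\overline{k}}\big)$. Since $L/k$ is separable, $L\otimes_k\overline{k}\cong\overline{k}^{\,n}$, the $n$ factors corresponding to the distinct embeddings $\sigma_1,\dots,\sigma_n\colon L\hookrightarrow\overline{k}$; hence $V_{L\otimes_k\overline{k}}$ is the disjoint union $\coprod_i V^{\sigma_i}$ of the conjugate varieties, and Weil restriction along a product of copies of $\overline{k}$ carries this disjoint union to the product $W_{\overline{k}}\cong\prod_{i=1}^{n}V^{\sigma_i}$. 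Each $V^{\sigma_i}$ is the image under the field embedding $\sigma_i$ of the absolutely irreducible variety $V$, hence is irreducible over $\overline{k}$, and a finite product of irreducible varieties over an algebraically closed field is irreducible; thus $W_{\overline{k}}$ is irreducible and nonempty, i.e. $W$ is absolutely irreducible over $k$.

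The main obstacle is precisely this absolute irreducibility of $W$. It fails for inseparable extensions — there $L\otimes_k\overline{k}$ has nilpotents and the geometric fibre is no longer a clean product of conjugates of $V$ — so the separability hypothesis is genuinely needed, exactly as used above. If one prefers to avoid the formalism of Weil restriction, the same content can be obtained by performing the explicit substitution above and checking directly, using the separability of the power basis $1,\alpha,\dots,\alpha^{n-1}$, that the resulting system over $k$ defines an absolutely irreducible variety.
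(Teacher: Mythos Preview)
The paper does not supply its own proof of this fact: it is recorded as a \emph{Fact} with a citation to Duret \cite[Lemme 2.1]{jld} and is then used as a black box in Section~\ref{sec_PAC}. So there is no in-paper argument to compare against.

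Your argument via Weil restriction is correct and is the standard modern route (it is essentially the proof one finds in Fried--Jarden). The key points --- the functorial identity $W(k)=V(L)$, the base-change formula $R_{L/k}(V)\times_k\overline{k}\cong R_{(L\otimes_k\overline{k})/\overline{k}}(V_{L\otimes_k\overline{k}})$, the splitting $L\otimes_k\overline{k}\cong\overline{k}^{\,n}$ from separability, and the resulting product decomposition $W_{\overline{k}}\cong\prod_i V^{\sigma_i}$ --- are all handled correctly, and you rightly isolate separability as the place where the argument would break (nilpotents in $L\otimes_k\overline{k}$ for inseparable $L/k$). Duret's original lemma is phrased in more elementary language but encodes the same descent idea; your presentation is cleaner and makes the role of separability more transparent. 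One small remark: you implicitly use that each $V^{\sigma_i}$ is geometrically \emph{reduced} as well as irreducible, so that the product is a variety and not merely an irreducible scheme; this is automatic since $V$ is absolutely irreducible (hence geometrically integral), but it is worth saying once.
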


\begin{thm}
Let $K$ be a field and $k$ be a subfield of $K$ which is a non separably closed PAC field and relatively algebraically closed in $K$. Then, the theory of $K$ has the $n$-independence property.
\end{thm}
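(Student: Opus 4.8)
The plan is to build, for each natural number $n$, a formula witnessing $\mathrm{IP}_n$ using Fact \ref{facIP} as the base case ($n=1$) and amplifying it by an ``$n$-fold product'' trick analogous to the one used in Remark \ref{rem_n-dep} and Lemma \ref{lem_GaCon}. First I would reduce to a convenient situation: since $k$ is non separably closed, it has a proper finite separable extension, hence (replacing $k$ by a suitable finite separable subextension and using Fact \ref{facPAC} to stay PAC) one can find a prime $p \neq \operatorname{char}(K)$ and an element of $k$ with no $p$th root in $K$; passing to $k(\zeta_p)$ — still PAC by Fact \ref{facPAC}, still with the non-$p$th-power property, and one checks it stays relatively algebraically closed in a corresponding extension of $K$ — we may assume $k$ contains the $p$th roots of unity. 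This is exactly the hypothesis of Fact \ref{facIP}, which already gives $\mathrm{IP}_1$ via the formula $\phi(x;y) := \exists z\,(z^p = x + y)$ applied to pairwise distinct parameters $a_i \in k$.

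The core step is to boost this to $\mathrm{IP}_n$. Fix pairwise distinct elements $(\alpha_{i,j} : i \in n,\ j \in \omega)$ of $k$, and for $(i_0,\dots,i_{n-1}) \in \omega^n$ set, say, $b_{(i_0,\dots,i_{n-1})} = \sum_{l=0}^{n-1}\alpha_{l,i_l}$ (or a product, depending on which algebraic-independence bookkeeping is cleaner). Consider the formula $\psi(y_0,\dots,y_{n-1};x) := \exists z\,\big(z^p = x + y_0 + \dots + y_{n-1}\big)$, with the role of the single ``parameter'' $x$ in Fact \ref{facIP} played by $x + \sum_l b_{I}$-type shifts. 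Concretely: given a subset $I \subseteq \omega^n$, I want an element $c_I \in K$ such that $\psi(\alpha_{0,i_0},\dots,\alpha_{n-1,i_{n-1}};c_I)$ holds iff $(i_0,\dots,i_{n-1}) \in I$, i.e. iff $c_I + \sum_l \alpha_{l,i_l}$ is a $p$th power in $K$. Since the map $(i_0,\dots,i_{n-1}) \mapsto \sum_l \alpha_{l,i_l}$ is injective (the $\alpha$'s being distinct, or algebraically independent if needed), the finitely many values $\sum_l\alpha_{l,i_l}$ appearing in any finite instance are pairwise distinct elements of $k$; Fact \ref{facIP}, applied with ``$a_i$'' ranging over these values and the two disjoint sets being the desired ``in $I$'' and ``not in $I$'' indices, produces precisely such a $c_I$ in $K$ — here one must note that the element witnessed by Fact \ref{facIP} lies in $K$ and that $k$'s relative algebraic closedness in $K$ is used to ensure the non-$p$th-power condition persists in $K$. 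A compactness argument then assembles the full $\mathrm{IP}_n$ configuration.

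The main obstacle I anticipate is bookkeeping rather than conceptual: ensuring that after the preliminary reductions (finite separable extension to get the non-$p$th-power element, adjoining $\zeta_p$) all three hypotheses of Fact \ref{facIP} simultaneously survive, and that the relative algebraic closedness of $k$ in $K$ is genuinely enough to transfer ``no $p$th root in $k$'' to ``no $p$th root in $K$'' for the relevant shifted elements. One also needs the indexing map to $k$ to be injective on each finite sub-array so that Fact \ref{facIP} applies verbatim; using $\mathbb{Z}$-linearly independent $\alpha_{l,j}$ over the prime field, or plainly algebraically independent ones, makes $(i_0,\dots,i_{n-1}) \mapsto \sum_l \alpha_{l,i_l}$ injective and kills any accidental coincidences. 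Once these points are nailed down, the amplification from $n=1$ to general $n$ is formal, and Remark \ref{rem_IPn} / Fact \ref{fact_x1} guarantee we lose nothing by taking $|x|=1$.
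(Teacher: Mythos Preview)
Your amplification step --- using Fact~\ref{facIP} on the pairwise distinct values $\sum_l \alpha_{l,i_l}$ (or $\prod_l \alpha_{l,i_l}$) to produce the $c_I$'s --- is exactly what the paper does in its second case, so that part is fine. The paper uses products of algebraically independent elements and passes to an uncountable elementary extension to guarantee they exist, but sums of $\mathbb{Z}$-linearly independent elements would work just as well; this is cosmetic.

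The real gap is your preliminary reduction. You assert that because $k$ is not separably closed, ``one can find a prime $p \neq \operatorname{char}(K)$ and an element of $k$ with no $p$th root in $K$.'' This is not justified and is in general false: a non separably closed PAC field of characteristic $q>0$ can have a pro-$q$ absolute Galois group (every projective profinite group arises as the absolute Galois group of some PAC field of prescribed characteristic), in which case every proper finite separable extension has $q$-power degree and no prime $p\neq q$ is available. Fact~\ref{facIP} explicitly requires $p$ different from the characteristic, so in this situation your argument cannot even start. The paper handles exactly this case separately: if the prime $p$ one extracts from a Galois subextension happens to equal $\operatorname{char}(k)$, then the degree-$p$ extension is Artin-Schreier, so a suitable finite extension $K'$ of $K$ (still interpretable in $K$) is not Artin-Schreier closed, and Theorem~\ref{thm_ASENIPn} gives $\mathrm{IP}_n$ for $K'$ and hence for $K$. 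You need to add this case; once you do, the rest of your plan matches the paper's proof.
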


\proof
If $k$ is countable, we may work in an elementary extension of the tuple $(K,k)$ for which it is uncountable. As $k$ is non separably closed, there exists a proper Galois extension $l$ of $k$. Let $p$ be a prime number that divides the degree of $l$ over $k$. Then there is a separable extension $k'$ of $k$ such that the Galois extension $l$ over $k'$ is of degree $p$. We may distinguish two cases:

\begin{enumerate}
\item The characteristic of $k$ is equal to $p$. As $l$ is a cyclic Galois extension of degree $p$ of $k'$, a field of characteristic $p$, it is an Artin-Schreier extension of $k'$.  We pick $\alpha$ such that $k' = k (\alpha)$ and let $K' = K(\alpha)$. As $k'$ is separable over $k$, it is relatively algebraically closed in $K'$ by \cite[p.59]{La}. Hence $K'$ admits an Artin-Schreier extension and consequently its theory has IP$_n$ by Theorem \ref{thm_ASENIPn}. As it is an algebraic extension of $K$, thus interpretable in $K$, the theory $\Th(K)$ has IP$_n$ as well.

\item The characteristic of $k$ is different than $p$. Since $l$ is a separable extension of $k'$, we can find an element $\beta$ of $l$ such that $l$ is equal to $k'(\beta)$.
  Let $\omega$ be a primitive $p$-root of unity and let $k'_{\omega}= k'(\omega)$ and $l_{\omega}= l(\omega)$. Note that  $l_{\omega}$ is equal to $k'_{\omega}(\beta)$ and that the degree $[l_{\omega}:k'_{\omega}]$  is at most $p$ and the degree $ [k'_{\omega}: k']$ is strictly smaller than $p$. Additionally, we have:
\[[l_{\omega}:k'_{\omega}] \cdot [k'_{\omega}: k'] = [ l_{\omega}: k'] = [ l_{\omega}: l] \cdot [l: k'] = [ l_{\omega}: l] \cdot p.\]
Thus $[l_{\omega}:k'_{\omega}]$ is divisible by $p$ and hence equal to $p$. Furthermore, the conjugates of $\beta$ over $k'_{\omega}$ are the same as over $k'$. Hence, as $l$ is a Galois extension of $k'$, they are contained in $l$ and whence in $l_\omega$. Thus, the field $l_\omega$ is a cyclic Galois extension of the field $k'_\omega$ and $k'_\omega$ contains the $p$-roots of unity. In other words, $l_\omega$ is a Kummer extension of $k'_\omega$ of degree $p$. So there exists an element $\delta$ in $k'_\omega$ that does not have a $p$ root in it.
Furthermore, as $k'_\omega$ is a finite separable extension of $k$, it is also PAC by Fact \ref{facPAC} and it is relatively  algebraically closed in $K'_\omega = K'(\omega)$ by \cite[p.59]{La}. Thus, the element $\delta$ has no $p$-root in  $K'_\omega$  as well. Let $\{a_{i,j}: j < n, i \in \omega\}$ be a set of algebraic independent elements of $k'_\omega$ which exists as it is an uncountable field. This ensures that $\prod_{l=0}^{n-1} a_{i_l,l} \neq \prod_{l=0}^{n-1} a_{j_l,l}$ for $ (i_0, \dots, i_{n-1}) \neq  (j_0, \dots, j_{n-1})$. Thus we may apply Fact \ref{facIP} to $K'_\omega$, $k'_\omega$ and the infinite set $\{ \prod_{l=0}^{n-1} a_{i_l,l}: (i_0, \dots, i_{n-1}) \in \N^n\}$. We deduce that for the formula $\varphi(y;x_0, \dots, x_{n-1}) =  \exists z (z^p = y + \prod_{i=0}^{n-1} x_i)$ and for any disjoint finite subsets $I$ and $J$ of $\N^n$ there exists an element in $K'_\omega$ that realizes
\[ \{ \varphi(y; a_{i_0,0}, \dots, a_{i_{n-1},n-1})\}_{(i_0, \dots, i_{n-1}) \in I} \cup \{\neg \varphi(y; a_{j_0,0}, \dots, a_{j_{n-1},n-1})\}_{(j_0, \dots, j_{n-1}) \in J }\]
Thus $\Th(K'_\omega)$ has the IP$_n$ property by compactness. As again $K'_\omega$ is interpretable in $K$, we can conclude that the theory of $K$ has the IP$_n$ property as well.
\qed
\end{enumerate}

\begin{cor}
The theory of any non separably closed PAC field has the IP$_n$ property.
\end{cor}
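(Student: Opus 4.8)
The plan is to derive the Corollary directly from the Theorem by reducing the general case to the one already treated. The Theorem assumes that the PAC subfield $k$ is \emph{relatively algebraically closed} in the ambient field $K$; the Corollary drops this hypothesis, asking only that $K$ itself be a non separably closed PAC field. So the natural move is to apply the Theorem with $k = K$: a field is always relatively algebraically closed in itself, and a PAC field is trivially a subfield of itself, so the only thing to check is that $K$ being non separably closed as a PAC field is exactly the hypothesis ``$k$ is a non separably closed PAC field'' of the Theorem.

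First I would observe that a PAC field which is not separably closed is, in particular, not algebraically closed, so the Theorem genuinely applies and is not vacuous. Then I would simply invoke the Theorem with the pair $(K,K)$: since $K$ is relatively algebraically closed in $K$ and $K$ is a non separably closed PAC field, the Theorem yields that $\Th(K)$ has the $n$-independence property. That is precisely the assertion of the Corollary, and since $n$ was arbitrary this gives IP$_n$ for all natural numbers $n$ as announced in the abstract.

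There is essentially no obstacle here: the Corollary is a degenerate special case of the Theorem, included for emphasis because it recovers and strengthens Duret's result (the case $n=1$, \cite{jld}) without any auxiliary subfield. If one wanted to spell out even the trivial verification, the only point worth a sentence is that ``non separably closed'' for $K$ is inherited from, and equivalent to, the corresponding hypothesis on the designated PAC subfield in the statement of the Theorem, so no elementary-extension or relative-closure argument is needed in this instance.

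\begin{proof}
Apply the previous theorem with $k = K$. Indeed, $K$ is trivially a subfield of itself and is relatively algebraically closed in itself, and by hypothesis $K$ is a non separably closed PAC field. Hence the theory of $K$ has the $n$-independence property.
\end{proof}
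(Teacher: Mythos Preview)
Your proposal is correct and matches the paper's intent: the corollary is stated without proof there, as an immediate specialization of the preceding theorem with $k=K$. The observation that a field is relatively algebraically closed in itself is exactly the (trivial) point needed, so nothing is missing.
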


In the special case of pseudo-finite fields or, more generally, e-free PAC fields the previous corollary is a consequence of a result of Beyarslan proved in \cite{OeBe}, namely that one can interpret the $n$-hypergraph in any such field.

\section{Applications to valued fields}\label{sec_app}

In \cite{ik_ts_fow} the authors deduce that an NIP valued field of positive characteristic $p$ has to be $p$-divisible simply by the fact that infinite NIP fields are Artin-Schreier closed \cite[Proposition 5.4]{ik_ts_fow}. Thus their result generalizes to our framework.

For the rest of the section, we fix some natural number $n$.
\begin{cor}
If $(K,v)$ is an n-dependent valued field of positive characteristic $p$, then the value group of $K$ is $p$-divisible.
\end{cor}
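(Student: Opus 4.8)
The plan is to reduce immediately to Theorem~\ref{thm_ASENIPn} and then invoke the classical valuation-theoretic argument behind \cite[Proposition~5.4]{ik_ts_fow}. First I would dispose of the trivial case: if $K$ is finite the valuation $v$ is trivial and its value group $\{0\}$ is vacuously $p$-divisible, so I may assume $K$ is infinite. Next I would observe that the reduct of the $n$-dependent valued field $(K,v)$ to the language of rings is still an $n$-dependent field, since any array witnessing IP$_n$ for a ring formula in the reduct witnesses it equally well in the expansion. Hence Theorem~\ref{thm_ASENIPn} applies and $K$ is Artin-Schreier closed, i.e. $\wp(K)=K$ for $\wp(x)=x^p-x$. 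It then remains to prove the purely valuation-theoretic claim that a valued field of characteristic $p$ with $\wp(K)=K$ has $p$-divisible value group.

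For that last step I would argue by contradiction. Writing $\Gamma=v(K^\times)$, suppose there is $a\in K^\times$ with $v(a)\notin p\Gamma$; then $v(a)\neq 0$, and after possibly replacing $a$ by $a^{-1}$ (which leaves $v(a)\notin p\Gamma$, as $p\Gamma$ is a subgroup) I may assume $v(a)<0$. By Artin-Schreier closedness choose $b\in K$ with $b^p-b=a$. If $v(b)\geq 0$ then $v(b^p-b)\geq 0$, contradicting $v(b^p-b)=v(a)<0$; so $v(b)<0$, whence $v(b^p)=p\,v(b)<v(b)$ and therefore $v(a)=v(b^p-b)=p\,v(b)\in p\Gamma$, a contradiction. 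Thus $\Gamma$ is $p$-divisible.

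The main obstacle here has in fact already been cleared: it is Theorem~\ref{thm_ASENIPn}, the Artin-Schreier closedness of infinite $n$-dependent fields, whose proof via Proposition~\ref{prop_cc} and the connectedness of the vector groups $G_{\bar a}$ is the substantive part. What remains above --- the reduct observation and the final valuation computation --- is elementary and goes through exactly as in the NIP case treated in \cite{ik_ts_fow}, so I do not anticipate any genuine difficulty beyond bookkeeping.
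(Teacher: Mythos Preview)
Your proposal is correct and follows exactly the route the paper indicates: the paper simply remarks that \cite[Proposition~5.4]{ik_ts_fow} only uses Artin--Schreier closedness, so Theorem~\ref{thm_ASENIPn} makes the same argument go through in the $n$-dependent setting. You have in fact spelled out more detail than the paper does --- the finite-field case, the reduct observation, and the ultrametric computation showing $v(a)=p\,v(b)$ --- but the strategy is identical.
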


Together with Corollary \ref{cor_pnotdiv}, we can conclude the following analogue to \cite[Corollary 5.10]{ik_ts_fow}.
\begin{cor}
Every n-dependent valued field of positive characteristic $p$ whose residue field is perfect, is Kaplansky, i.e. \begin{itemize}
\item the value group is $p$-divisible;
\item the residue field is perfect and does not admit a finite separable extension whose degree is divisible by $p$.
\end{itemize}
\end{cor}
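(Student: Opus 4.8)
The plan is to establish the two clauses of the conclusion separately: the first directly from the corollary immediately above, and the second from Corollary~\ref{cor_pnotdiv} applied to the residue field, which is interpretable in $(K,v)$ and hence itself $n$-dependent. Throughout, as in all the preceding statements, $K$ is infinite.

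The first clause, $p$-divisibility of the value group, is precisely the content of the preceding corollary, so nothing further is needed. For the second clause write $k$ for the residue field; perfectness of $k$ is part of the hypothesis, so it remains to show that $k$ has no finite separable extension of degree divisible by $p$. The key observation is that in any of the usual languages for valued fields the valuation ring $\mathcal{O}_v$ and its maximal ideal are definable in $(K,v)$, so $k$, being the quotient of $\mathcal{O}_v$ by its maximal ideal, is interpretable in $(K,v)$; and IP$_n$ is inherited along interpretations, since a formula witnessing IP$_n$ in an interpreted structure, fed with lifts of the witnessing parameters, yields a formula witnessing IP$_n$ in the ambient structure. Hence $k$ is an $n$-dependent field of characteristic $p$. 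Provided $k$ is also infinite, Corollary~\ref{cor_pnotdiv} applies verbatim to $k$ and delivers exactly the second clause.

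To see that $k$ is infinite I would invoke the Artin--Schreier result: by Theorem~\ref{thm_ASENIPn} we have $\wp(K)=K$. This passes to the residue field by a one-line valuation computation: given $a\in\mathcal{O}_v$, choose $b\in K$ with $b^p-b=a$; were $v(b)<0$ we would get $v(b^p-b)=p\,v(b)<0\le v(a)$, which is impossible, so $b\in\mathcal{O}_v$, and the residue of $a$ equals $\wp$ of the residue of $b$. Thus $\wp(k)=k$, and since a finite field is never Artin--Schreier closed, $k$ must be infinite. Combining this with the previous paragraph yields the second clause, and together with the first clause the corollary follows. I do not anticipate any genuine difficulty; the only step demanding a little (entirely routine) care is the interpretability-and-descent bookkeeping, i.e. checking that $n$-dependence really descends to $k$ and that $k$ is interpretable in whichever language of valued fields one fixes.
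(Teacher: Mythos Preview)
Your proof is correct and follows exactly the route the paper has in mind: the paper's own argument is the one-line remark ``Together with Corollary~\ref{cor_pnotdiv}, we can conclude the following analogue to \cite[Corollary~5.10]{ik_ts_fow}'', and what you have written is precisely the unpacking of that analogue---first clause from the preceding corollary, second clause by interpreting the residue field and applying Corollary~\ref{cor_pnotdiv} to it. Your extra care in verifying that the residue field is infinite (via Artin--Schreier closedness descending along the valuation) is a detail the paper leaves to the reader through the citation, and your handling of it is correct.
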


Now, we turn to the question whether an $n$-dependent henselian valued field can carry a nontrivial definable henselian valuation. Note that by a definable henselian valuation $v$ on $K$ we mean that the valuation ring of $(K,v)$, i.\ e.\ the set of elements of $K$ with non-negative value, is a definable set in the language of rings. We need the following definition:

\begin{defn}
Let $K$ be a field. We say that its absolute Galois group is \emph{universal} if for every finite group $G$ there exist a finite extensions $L$ of $K$ and a Galois extension $M$ of $L$  such that $\Gal(M/L) \cong G$.
\end{defn}

As any finite extension of an $n$-dependent field $K$ of characteristic $p>0$ is still $n$-dependent and of characteristic $p$, one cannot find a finite extensions $L \subseteq M$ of $K$ such that their Galois group $\Gal(M/L)$ is of order $p$. Hence any $n$-dependent field of positive characteristic has a non-universal absolute Galois group. Note that Jahnke and Koenigsmann showed in \cite[Theorem 3.15]{fjjk} that a henselian valued field whose absolute value group is non universal and which is neither separably nor real closed admits a non-trivial definable henselian valuation. Hence this gives the following result which is a generalization of \cite[Corollary 3.18]{fjjk}:

\begin{prop}
Let $(K,v)$ be a non-trivially henselian valued field of positive characteristic $p$ which is not separably closed. If $K$ is $n$-dependent then $K$ admits a non-trivial definable henselian valuation.
\end{prop}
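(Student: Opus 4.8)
The plan is to reduce the statement entirely to the theorem of Jahnke and Koenigsmann quoted just above it, namely \cite[Theorem 3.15]{fjjk}: a henselian valued field which is neither separably nor real closed and whose absolute Galois group is non-universal admits a non-trivial definable henselian valuation. Of the three hypotheses of that theorem, two are immediate for our $(K,v)$: it is non-trivially henselian and not separably closed by assumption, and it cannot be real closed since real closed fields have characteristic $0$ while $\operatorname{char} K = p > 0$. So the whole content of the proof is to check that the absolute Galois group of $K$ is not universal.

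For that step I would argue as follows. First observe that $K$ is infinite, since a field carrying a non-trivial valuation is necessarily infinite; moreover every finite extension $L$ of $K$ is interpretable in $K$, hence again $n$-dependent, and is of characteristic $p$, so it is in particular an infinite $n$-dependent field of characteristic $p$. Now suppose for contradiction that the absolute Galois group of $K$ were universal. Applying the definition of universality with the finite group $G = \mathbb{Z}/p\mathbb{Z}$, we would obtain a finite extension $L$ of $K$ together with a Galois extension $M/L$ satisfying $\Gal(M/L) \cong \mathbb{Z}/p\mathbb{Z}$. Such an $M/L$ is separable of degree $p$; but Corollary \ref{cor_pnotdiv}, applied to the infinite $n$-dependent field $L$ of characteristic $p$, says that $p$ does not divide the degree of any finite separable extension of $L$, so $p \nmid [M:L] = p$, a contradiction. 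Hence no finite group of order $p$ is realized as $\Gal(M/L)$ for any finite $L \geq K$, and therefore the absolute Galois group of $K$ is non-universal.

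Plugging this into \cite[Theorem 3.15]{fjjk} yields that $K$ admits a non-trivial definable henselian valuation, which is the claim. There is no real obstacle here: the argument is short, and the only thing to be careful about is the bookkeeping of hypotheses when invoking the external results — that $L$ is infinite and $n$-dependent so that Corollary \ref{cor_pnotdiv} is legitimately available at each finite extension, and that positive characteristic disposes of the real closed alternative in \cite[Theorem 3.15]{fjjk}. Both are routine, so the proof is essentially a one-paragraph combination of Corollary \ref{cor_pnotdiv} with the Jahnke–Koenigsmann criterion, exactly as foreshadowed in the discussion preceding the statement.
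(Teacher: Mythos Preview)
Your argument is correct and matches the paper's own reasoning essentially verbatim: show that the absolute Galois group of $K$ is non-universal because no finite extension $L$ of $K$ (still $n$-dependent of characteristic $p$) can have a Galois extension of degree $p$, and then invoke \cite[Theorem 3.15]{fjjk}. Your extra remarks that $K$ is infinite and not real closed are the routine bookkeeping the paper leaves implicit.
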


\bibliographystyle[\small

\end{document}